\newcommand{\Tr}{{\rm Tr}}
\newcommand{\gf}{ {{\mathbb F}} }
\newtheorem{theorem}{Theorem}%  meant for continuous numbers
\newtheorem{lemma}{Lemma}
\newtheorem{example}{Example}%
\begin{document}

\title[The compositional inverses of permutation polynomials from trace functions over finite fields]{The compositional inverses of permutation polynomials from trace functions over finite fields}

%%=============================================================%%
%% Prefix	-> \pfx{Dr}
%% GivenName	-> \fnm{Joergen W.}
%% Particle	-> \spfx{van der} -> surname prefix
%% FamilyName	-> \sur{Ploeg}
%% Suffix	-> \sfx{IV}
%% NatureName	-> \tanm{Poet Laureate} -> Title after name
%% Degrees	-> \dgr{MSc, PhD}
%% \author*[1,2]{\pfx{Dr} \fnm{Joergen W.} \spfx{van der} \sur{Ploeg} \sfx{IV} \tanm{Poet Laureate} 
%%                 \dgr{MSc, PhD}}\email{iauthor@gmail.com}
%%=============================================================%%
\author*[1]{\fnm{} \sur{Danyao Wu}}\email{wudanyao111@163.com}

\author[2]{\fnm{} \sur{Pingzhi Yuan}}\email{yuanpz@scnu.edu.cn}
%\equalcont{These authors contributed equally to this work.}

%\author[1,2]{\fnm{Third} \sur{Author}}\email{iiiauthor@gmail.com}
%\equalcont{These authors contributed equally to this work.}

\affil*[1]{\orgdiv{School of Computer Science and Technology}, \orgname{Dongguan University of Technology}, \orgaddress{%\street{Street},
		\city{Dongguan}, \postcode{523808}, %\state{State},
		\country{China}}}

\affil[2]{\orgdiv{School of Mathematics}, \orgname{South China Normal University}, \orgaddress{%\street{Street},
		\city{Guangzhou}, \postcode{510631},% \state{State},
		\country{China}}}

\abstract{In this paper, we present the compositional inverses of several classes permutation polynomials of the form $\sum_{i=1}^kb_i\left(\Tr_m^{mn}(x)^{t_i}+\delta\right)^{s_i}+f_1(x)$, where $1\leq i \leq k,$ $s_i$ are positive integers, $b_i \in \gf_{p^m},$  $p$ is a  prime and  $f_1(x)$ is a polynomial over $\gf_{p^{mn}}$  satisfying the following conditions:
	(i) $\Tr_m^{mn}(x) \circ f_1(x)=\varphi(x) \circ \Tr_m^{mn}(x),$ where  $\varphi(x)$ is a polynomial over $\gf_{p^m};$
	(ii) For any $a \in \gf_{p^m},$ $f_1(x)$ is injective on $\Tr_m^{mn}(a)^{-1}.$}

\keywords{finite field, compositional inverse, permutation polynomial,  trace function}

%%\pacs[JEL Classification]{D8, H51}

\pacs[MSC Classification]{11T06; 12E10}

\maketitle

\section{Introduction}\label{sec1}

Let  $\gf_q$ be the finite field with $q$ elements,  where $q$ is a prime power, and
let $\gf_q[x]$
be the ring of polynomials in a single indeterminate $x$ over $\gf_q$. A polynomial
$f \in\gf_q[x]$ is called a {\em permutation polynomial} of $\gf_q$ if its
associated polynomial mapping $f: c\mapsto f(c)$ from $\gf_q$ to itself is a bijective. The unique polynomial denoted by $f^{-1}(x)$ over $\gf_q$
such that $f(f^{-1}(x))\equiv f^{-1}(f(x)) \equiv x \pmod{x^q-x}$ is called the compositional inverse of $f(x).$ Furthermore,  $f(x)$ is called  an involution when $f^{-1}(x)=f(x).$
The study of permutation polynomials and their compositional inverses over finite
fields in terms of their coefficients is a classical and difficult subject which
attracts people's interest partially due to their wide applications in coding theory
\cite{ding2013cyclic,ding2014binary,laigle2007permutation},
cryptography \cite{rivest1978method,schwenk1998public}, combinatorial design theory \cite{ding2006family}, and other areas of mathematics and engineering \cite{lidl1997finite,lidl1994introduction}. 
In general, it is difficult to discover new classes of permutation polynomials and computing the coefficients of
the compositional inverse of a permutation polynomial seems to be even more difficult, except for several classical classes such as
monomials, linearized polynomials, Dickson polynomials.
Compositional inverses of several classes of permutation polynomials
in explicit or implicit forms have been investigated in recent years,
which have nice structure.
We refer the readers to \cite{coulter2002compositional,li2019compositional,niu2021finding,tuxanidy2014inverses,tuxanidy2017compositional,wang2007cyclotomic,wang2017note,baofengwu2013compositional,wu2014compositional,wu2013compositional,yuan2022compositional,zheng2019inverses,zheng2019constructions} for more details. 

Zeng et al. \cite{zeng2015permutation} investigated the permutation 
properties exhibited by polynomials of the form
$$(\Tr_m^{nm}(x)+\delta)^s+L(x)$$
over  $\gf_{2^{nm}},$ where   $s=k(2^m\pm1)+1,$  $k, n, m \in \mathbb{Z}^+,$ $\delta \in \gf_{2^{nm}},$ $\Tr_m^{nm}(x)$ is the trace function from $\gf_{2^{nm}}$ to $\gf_{2^m}$
and $L(x)=\Tr_m^{nm}(x)+x$ or $x$. The primary method involved determining the number of solutions of special equations within finite fields. Subsequently, Wu and Yuan  \cite{wu2022further} enhanced  all the findings from \cite{zeng2015permutation} by utilizing  the Akbary--Ghioca--Wang (AGW) criterion to study  a class of permutation polynomials of the form
\begin{equation}\label{1}
	b(\Tr_m^{nm}(x)+\delta)^{1+\frac{i(p^{nm}-1)}{d}}+c(\Tr_m^{nm}(x)+\delta)^{1+\frac{j(p^{nm}-1)}{d}}+h(x)
\end{equation}
over $\gf_{p^{nm}},$ where $p$ is a  prime,  $m, n, i, j, d \in\mathbb{Z}^+$ with $p^m\equiv\pm1\pmod{d},$ $b,c \in \gf_{p^m},$ $\delta \in \gf_{p^{nm}}$ and  $h(x)$ is a polynomial over $\gf_{p^{mn}}$  satisfying the following conditions:
(i) $\Tr_m^{mn}(x) \circ h(x)=\tau(x) \circ \Tr_m^{mn}(x),$ where  $\tau(x)$ is a polynomial over $\gf_{p^m};$
(ii) For any $s \in \gf_{p^m},$ $h(x)$ is injective on $\Tr_m^{mn}(s)^{-1}.$ 
Furthermore,  two classes of permutation polynomials in the form of 
$$x+(\Tr_m^{2m}(x)^k+\delta)^s \,\, \text{and} \,\, x+(\Tr_m^{2m}(x)^k+\delta)^{s_1}+ (\Tr_m^{2m}(x)^k+\delta)^{s_2}$$ over $\gf_{2^{2m}}$ were presented in \cite{li2020some} based on the AGW criterion, where $m, k, s_1, s_2 \in \mathbb{Z}^+.$ 
Inspired by the work of \cite{wu2022further,li2020some},  we contribute to the field by presenting the compositional inverses of several classes permutation polynomials of the form $\sum_{i=1}^kb_i\left(\Tr_m^{mn}(x)^{t_i}+\delta\right)^{s_i}+f_1(x)$, where for $1\leq i \leq k,$ $s_i$ are positive integers, $b_i \in \gf_{p^m},$  $p$ is prime and  $f_1(x)$ is a polynomial over $\gf_{p^{mn}}$  satisfying the following conditions:
(i) $\Tr_m^{mn}(x) \circ f_1(x)=\varphi(x) \circ \Tr_m^{mn}(x),$ where  $\varphi(x)$ is a polynomial over $\gf_{p^m};$
(ii) For any $a \in \gf_{p^m},$ $f_1(x)$ is injective on $\Tr_m^{mn}(a)^{-1}.$

The remainder of this paper is organized as follows. Section 2,  introduces basic concepts and related results. Next, Section 3, presents the investigation of the compositional inverses of permutation polynomials in the form $\sum_{i=1}^kb_i\left(\Tr_m^{mn}(x)+\delta\right)^{s_i}+f_1(x)$ over $\gf_{p^{nm}}.$  Finally,  Section 4,  provides  the investigation of the compositional inverses of permutation polynomials in the form $\sum_{i=1}^kb_i\left(\Tr_m^{2m}(x)^{t_i}+\delta\right)^{s_i}+x$ over $\gf_{2^{2m}}.$

\section{Preliminaries}
In this section, we present some auxiliary results that will be needed in the
sequel.

Let $m$ and $n$ be two positive integers with $m\mid n.$ The $trace function $ $\Tr_m^n(\cdot)$ from $\gf_{p^n}$ to $\gf_{p^m}$ is defined by
$$\Tr_m^n(x)=x+x^{p^m}+x^{p^{2m}}+\cdots+x^{p^{(\frac{n}{m}-1)m}}, \, \,  x \in \gf_{p^n}. $$

The following lemma was developed by Akbary, Ghioca and Wang \cite{akbary2011constructing}. It is called the AGW criterion \cite{Hou2015survey}.

\begin{lemma} \label{AGW} \cite[Lemma 1.1]{akbary2011constructing} Let $A, S$ and $\overline{S}$ be finite sets
	with $\sharp S =\sharp \overline{S}$,
	and let $f(x) : A\longrightarrow A$, $g(x): S\longrightarrow \overline{S}$, $\lambda(x): A\longrightarrow S,$ and $\overline{\lambda}(x):A\longrightarrow \overline{S}$ be maps
	such that $\overline{\lambda}(x)\circ f(x)=g(x)\circ \lambda(x).$
	If both $\lambda(x)$ and $\overline{\lambda}(x)$ are surjective,
	then the following statements are equivalent:\\
	(i) $f(x)$ is bijective (a permutation of $A$); and\\
	(ii) $g(x)$ is bijective from $S$ to $\overline{S}$ and
	$f(x)$ is injective on $\lambda^{-1}(s)$ for each $s \in S.$
\end{lemma}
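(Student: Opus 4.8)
The plan is to prove the equivalence (i)$\,\Leftrightarrow\,$(ii) by two short diagram chases based on the intertwining identity $\overline{\lambda}\circ f=g\circ\lambda$, using surjectivity of $\lambda$ and $\overline{\lambda}$ together with finiteness of the sets and the hypothesis $\sharp S=\sharp\overline{S}$. The only structural fact used beyond bookkeeping is that a self-map of a finite set is bijective as soon as it is injective, and that a map between two finite sets of equal cardinality is bijective as soon as it is surjective.

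For (i)$\,\Rightarrow\,$(ii): if $f$ is a permutation of $A$, then it is injective on every subset of $A$, in particular on each fiber $\lambda^{-1}(s)$, which gives the second assertion of (ii). For the first, since $\sharp S=\sharp\overline{S}<\infty$ it is enough to show that $g\colon S\to\overline{S}$ is onto. Given $\bar s\in\overline{S}$, choose $a\in A$ with $\overline{\lambda}(a)=\bar s$ by surjectivity of $\overline{\lambda}$, then $b\in A$ with $f(b)=a$ by bijectivity of $f$; the intertwining identity yields $g(\lambda(b))=\overline{\lambda}(f(b))=\overline{\lambda}(a)=\bar s$, so $\bar s\in\image(g)$.

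For (ii)$\,\Rightarrow\,$(i): suppose $g$ is bijective from $S$ to $\overline{S}$ and $f$ is injective on each fiber. Since $A$ is finite and $f\colon A\to A$, it suffices to show $f$ is injective. If $f(a)=f(b)$, applying $\overline{\lambda}$ and using the identity gives $g(\lambda(a))=\overline{\lambda}(f(a))=\overline{\lambda}(f(b))=g(\lambda(b))$, so $\lambda(a)=\lambda(b)=:s$ by injectivity of $g$; then $a,b\in\lambda^{-1}(s)$ and injectivity of $f$ on that fiber forces $a=b$. Equivalently, one can note $A=\bigsqcup_{s\in S}\lambda^{-1}(s)$, that $f(\lambda^{-1}(s))\subseteq\overline{\lambda}^{-1}(g(s))$ by the identity, and that the sets $\overline{\lambda}^{-1}(g(s))$ are pairwise disjoint because $g$ is injective, so injectivity of $f$ on each piece gives global injectivity.

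The argument presents no real obstacle; the one thing to keep straight is where each hypothesis is spent: surjectivity of $\overline{\lambda}$ and bijectivity of $f$ drive the forward direction, injectivity of $g$ and fiber-injectivity of $f$ drive the reverse direction, and $\sharp S=\sharp\overline{S}$ with finiteness is what upgrades a one-sided property to a bijection. Surjectivity of $\lambda$ is used only to ensure the fibers $\lambda^{-1}(s)$, $s\in S$, actually partition $A$, so that injectivity of $f$ on each of them is the correct complement to injectivity of $g$.
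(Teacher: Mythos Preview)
Your proof is correct and is the standard diagram-chase argument for the AGW criterion. Note, however, that the paper does not supply its own proof of this lemma: it is quoted from \cite{akbary2011constructing} and used as a black box, so there is nothing in the paper to compare your argument against.
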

Yuan \cite{yuan2022compositional} investigated the compositional inverses of a class of permutation polynomials in the additive cases.  
\begin{lemma} \label{agwf-} \cite[Theorem 3.4]{yuan2022compositional} 
	Let $q$ be a prime power, and let $S, \bar{S}$ be subsets $\gf_{q}$ with $\sharp S =\sharp \bar{S}.$
	Let $f(x): \gf_{q}\rightarrow \gf_{q}$, $g(x): S\rightarrow \bar{S}, $  $\lambda(x): \gf_{q}\rightarrow S,$ and $\bar{\lambda}(x): \gf_{q}\rightarrow \bar{S}$ be maps such that both $\lambda(x)$ and $\bar{\lambda}(x)$ are surjective maps and $\bar{\lambda}(x)\circ f(x)= g(x)\circ \lambda(x)$.  Let $f_1(x)$ be a permutation polynomial and $f(x)=f_1(x)+h(\lambda(x))$ is a permutation polynomial over $\gf_{q},$ and let $f_1^{-1}(x)$, $f^{-1}(x)$ and $g^{-1}(x)$ be the compositional inverses of $f_1(x)$, $f(x)$ and $g(x)$, respectively. Then 
	$$f^{-1}(x)=f_1^{-1}(x-h(g^{-1}(\bar{\lambda}(x)))).$$
	
\end{lemma}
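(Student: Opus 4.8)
The plan is to verify the claimed formula directly by exhibiting a left inverse of $f(x)$: since $f(x)$ permutes the finite set $\gf_q$, any left inverse is automatically the unique two‑sided compositional inverse, so it suffices to check one composition. Write $F(x):=f_1^{-1}\!\left(x-h\!\left(g^{-1}(\bar{\lambda}(x))\right)\right)$ for the polynomial on the right‑hand side. First I would confirm that $F$ is a well‑defined map $\gf_q\to\gf_q$: for $x\in\gf_q$ we have $\bar{\lambda}(x)\in\bar{S}$, so $g^{-1}(\bar{\lambda}(x))$ makes sense as soon as $g\colon S\to\bar{S}$ is a bijection; then $h$ is evaluated at a point of $S$, exactly as in the defining relation $f(x)=f_1(x)+h(\lambda(x))$, and finally $f_1^{-1}$ is available because $f_1$ is a permutation polynomial. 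The bijectivity of $g$ is not an additional hypothesis: applying the AGW criterion (Lemma~\ref{AGW}) with $A=\gf_q$, the sets $S,\bar{S}$ and the maps $f,g,\lambda,\bar{\lambda}$ — whose intertwining relation $\bar{\lambda}\circ f=g\circ\lambda$ and the surjectivity of $\lambda,\bar{\lambda}$ are assumed — the fact that $f$ permutes $\gf_q$ forces $g$ to be a bijection from $S$ onto $\bar{S}$. Hence $g^{-1}$ exists.

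Next I would run the substitution. Fix $y\in\gf_q$ and set $x:=f(y)=f_1(y)+h(\lambda(y))$. The intertwining relation gives $\bar{\lambda}(x)=\bar{\lambda}(f(y))=g(\lambda(y))$, and since $g$ is a bijection this yields $g^{-1}(\bar{\lambda}(x))=\lambda(y)$, whence $h\!\left(g^{-1}(\bar{\lambda}(x))\right)=h(\lambda(y))$. Subtracting, $x-h\!\left(g^{-1}(\bar{\lambda}(x))\right)=f_1(y)+h(\lambda(y))-h(\lambda(y))=f_1(y)$, so $F(f(y))=f_1^{-1}(f_1(y))=y$. As $y\in\gf_q$ was arbitrary, $F\circ f=\mathrm{id}$ on $\gf_q$, and because $f$ is a bijection of the finite set $\gf_q$ we conclude $F=f^{-1}$, i.e. $f^{-1}(x)=f_1^{-1}\!\left(x-h\!\left(g^{-1}(\bar{\lambda}(x))\right)\right)$.

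The argument is short and essentially mechanical once these pieces are assembled; the only step that genuinely needs justification — and the one I would treat as the main obstacle — is the existence of $g^{-1}$, that is, that $g$ really is invertible on $\bar{S}$. This is precisely where Lemma~\ref{AGW} enters, converting the hypothesis ``$f$ is a permutation'' into ``$g$ is a bijection.'' A secondary, essentially trivial, point is the observation that on a finite set a one‑sided compositional inverse is automatically two‑sided, which is what lets us verify only $F\circ f=\mathrm{id}$ rather than also $f\circ F=\mathrm{id}$; checking the latter directly would instead require controlling $\lambda\!\left(F(x)\right)$, which is less immediate and not needed here.
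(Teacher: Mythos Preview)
Your proof is correct. Note, however, that the paper does not actually prove this lemma: it is quoted verbatim from \cite[Theorem~3.4]{yuan2022compositional} without argument. The closest thing to a ``paper's own proof'' is the proof of the refinement Lemma~\ref{agwf-2}, and there the route is slightly different from yours. Rather than defining $F$ and checking $F\circ f=\mathrm{id}$, the paper starts from the two identities $f\circ f^{-1}=\mathrm{id}$ and $\lambda\circ f^{-1}=g^{-1}\circ\bar{\lambda}$ (the latter being the dual AGW relation, Lemma~\ref{agwdual}), substitutes the decomposition $f=f_1+h\circ\lambda$ into the first to obtain $f_1\circ f^{-1}(x)=x-h(g^{-1}(\bar{\lambda}(x)))$, and then inverts $f_1$. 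Your approach buys simplicity: once $g^{-1}$ is justified via Lemma~\ref{AGW}, the verification is a one-line substitution, and the ``one-sided inverse on a finite set is two-sided'' remark closes the argument cleanly. The paper's dual-diagram approach is more systematic and is what allows the generalization in Lemma~\ref{agwf-2}, where $f_1$ is no longer assumed to be a permutation and one must combine the two identities via auxiliary maps $\phi,\bar{\phi}$ before inverting.
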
%
If we compute the compositional inverse of $f(x)=f_1(x)+h(\lambda(x))$ by Lemma \ref{agwf-}, ideally, the polynomial $f_1(x)$ should  be a permutation polynomial. However, observations indicate that certain permutation polynomials in this form fail to satisfying the condition of  $f_1(x)$ being a  permutation polynomial. Therefore, it is necessary to refine the aforementioned lemma. The primary motivation  for this enhancement stems from 
\cite{yuan2022compositional}.

We list the lemma about the dual diagram of AGW criterion  \cite{yuan2022compositional}.
\begin{lemma}\label{agwdual}\cite[Theorem 2.6]{yuan2022compositional} 
	Let the notations be defined as 
	in Lemma \ref{AGW}. If $f(x):  A\longrightarrow A$ is a bijection, $f^{-1}(x)$ and $g^{-1}(x)$ are the compositional inverses of $f(x)$ and $g(x),$ respectively, then $$\lambda(x) \circ f^{-1}(x)= g^{-1}(x)\circ \bar{\lambda}(x).$$ 
\end{lemma}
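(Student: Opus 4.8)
The plan is to obtain the identity directly from the intertwining relation $\bar{\lambda}\circ f = g\circ\lambda$ by composing with $f^{-1}$ on the right, once we know that $g^{-1}$ exists. First I would invoke Lemma~\ref{AGW}: since $f$ is a bijection of $A$ and both $\lambda$ and $\bar{\lambda}$ are surjective, the AGW criterion guarantees that $g$ is a bijection from $S$ onto $\bar{S}$, so the compositional inverse $g^{-1}\colon\bar{S}\to S$ is well defined and the statement makes sense.

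Second, I would substitute $f^{-1}(x)$ for $x$ in the identity $\bar{\lambda}(f(x)) = g(\lambda(x))$. Since $f(f^{-1}(x)) = x$ for every $x\in A$, the left-hand side collapses to $\bar{\lambda}(x)$, yielding $\bar{\lambda}(x) = g\bigl(\lambda(f^{-1}(x))\bigr)$ for all $x\in A$. Applying $g^{-1}$ to both sides, which is legitimate by the previous step, gives $g^{-1}(\bar{\lambda}(x)) = \lambda(f^{-1}(x))$, that is, $\lambda\circ f^{-1} = g^{-1}\circ\bar{\lambda}$, which is the claim.

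I do not anticipate a serious obstacle here; the only points requiring care are bookkeeping ones. One should confirm that both sides are maps from $A$ to $S$ (indeed $\lambda\circ f^{-1}\colon A\to A\to S$ and $g^{-1}\circ\bar{\lambda}\colon A\to\bar{S}\to S$), and one should be explicit that the cancellation being used is $f\circ f^{-1}=\mathrm{id}_A$ applied inside $\bar{\lambda}$ on the right, not $f^{-1}\circ f$ on the left. If one prefers not to cite the AGW criterion for the existence of $g^{-1}$, an alternative is to verify directly that $g$ is injective: from $g(\lambda(a))=g(\lambda(b))$ one gets $\bar{\lambda}(f(a))=\bar{\lambda}(f(b))$, and then surjectivity of $\lambda$ together with bijectivity of $f$ and $\sharp S=\sharp\bar{S}$ forces $g$ to be bijective; but quoting Lemma~\ref{AGW} is cleaner. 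Finally, it is worth remarking that this is exactly the ``dual'' of the AGW square: the commuting diagram for $f$ is transported to the commuting diagram for $f^{-1}$ with the roles of $\lambda$ and $\bar{\lambda}$ interchanged.
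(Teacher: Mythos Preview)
Your argument is correct and is the standard way to derive the dual AGW diagram. Note that the paper does not supply its own proof of this lemma; it is quoted without proof from \cite{yuan2022compositional}, so there is nothing further to compare against.
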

Now, we refine the Lemma \ref{agwf-} based on the the dual diagram of AGW criterion. 
\begin{lemma}\label{agwf-2}
	Let $q$ be a prime power, and let $S, \bar{S}$ be subsets $\gf_{q}$ with $\sharp S =\sharp \bar{S}.$
	Let $f(x): \gf_{q}\rightarrow \gf_{q}$, $g(x): S\rightarrow \bar{S}, $  $\lambda(x): \gf_{q}\rightarrow S,$ and $\bar{\lambda}(x): \gf_{q}\rightarrow \bar{S}$ be maps such that both $\lambda(x)$ and $\bar{\lambda}(x)$ are surjective maps and $\bar{\lambda}(x)\circ f(x)= g(x)\circ \lambda(x)$.   Let $f_1(x)$ is a polynmial over $\gf_q$ and  $f(x)=f_1(x)+h(\lambda(x))$ is a permutation polynomial over $\gf_{q}.$ If there exist two  polynomials $\phi(x)$ and  $\bar{\phi}(x)$ over $\gf_q$ such that $\phi(x)\circ f_1(x)+\bar{\phi}(x)\circ \lambda(x)$ permutes $\gf_q$,    then the compositional inverse of $f(x)$ over $\gf_q$ is 
	$$f^{-1}(x)=(\phi(f_1(x)) +\bar{\phi}(\lambda(x)))^{-1}\circ \left(\phi(x-h(g^{-1}(\bar{\lambda}(x))) )+\bar{\phi}(g^{-1}(\bar{\lambda}(x))) \right),$$
	where $g^{-1}(x)$  and $(\phi(f_1(x)) +\bar{\phi}(\lambda(x)))^{-1}$ are the compositional inverses of $g(x)$ and $\phi(f_1(x)) +\bar{\phi}(\lambda(x))$, respectively. 
\end{lemma}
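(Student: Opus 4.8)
The plan is to verify the formula by direct substitution, the key being to use the dual form of the AGW criterion to evaluate $\lambda(f^{-1}(x))$. Put $F(x):=\phi(f_1(x))+\bar{\phi}(\lambda(x))$, which by hypothesis permutes $\gf_q$ and hence has a compositional inverse $F^{-1}$ on $\gf_q$. Since $f(x)$ is a permutation polynomial, it is enough to show that $F\bigl(f^{-1}(x)\bigr)=\phi\bigl(x-h(g^{-1}(\bar{\lambda}(x)))\bigr)+\bar{\phi}\bigl(g^{-1}(\bar{\lambda}(x))\bigr)$ for all $x\in\gf_q$; applying $F^{-1}$ on the left of both sides then gives exactly the claimed expression for $f^{-1}(x)$.

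To carry this out I would first note that, because $f$ permutes $\gf_q$ and $\bar{\lambda}\circ f=g\circ\lambda$ with $\lambda,\bar{\lambda}$ surjective, Lemma \ref{AGW} forces $g\colon S\to\bar{S}$ to be a bijection, so $g^{-1}$ is defined; Lemma \ref{agwdual} then yields the dual identity $\lambda\bigl(f^{-1}(x)\bigr)=g^{-1}\bigl(\bar{\lambda}(x)\bigr)$ on $\gf_q$. Next, from $f(x)=f_1(x)+h(\lambda(x))$ one gets $f_1(x)=f(x)-h(\lambda(x))$, and replacing $x$ by $f^{-1}(x)$ and using $f(f^{-1}(x))=x$ gives $f_1\bigl(f^{-1}(x)\bigr)=x-h\bigl(\lambda(f^{-1}(x))\bigr)=x-h\bigl(g^{-1}(\bar{\lambda}(x))\bigr)$, where the last equality is the dual identity. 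Substituting these two evaluations of $f_1\circ f^{-1}$ and $\lambda\circ f^{-1}$ into $F(f^{-1}(x))=\phi\bigl(f_1(f^{-1}(x))\bigr)+\bar{\phi}\bigl(\lambda(f^{-1}(x))\bigr)$ produces precisely the right-hand side above, and composing with $F^{-1}$ finishes the proof.

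I do not expect a serious obstacle here: the argument is essentially the one behind Lemma \ref{agwf-}, except that there $f_1$ itself is assumed to permute $\gf_q$ whereas here only the modified map $\phi(f_1(x))+\bar{\phi}(\lambda(x))$ does, and the point is that the dual AGW identity still lets us compute $f_1$ at $f^{-1}(x)$. The only places that need genuine care are the bookkeeping of domains — checking that $g^{-1}\bigl(\bar{\lambda}(x)\bigr)$ indeed lies in $S$ so that $h$ and $\bar{\phi}$ may be applied to it, and that $F$ being a permutation of $\gf_q$ makes $F^{-1}$ a well-defined polynomial map on $\gf_q$ — together with confirming that the hypotheses of Lemma \ref{agwdual} are met, which is exactly where it matters that $f(x)$, and not $f_1(x)$, is the permutation polynomial.
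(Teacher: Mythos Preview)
Your proposal is correct and follows essentially the same route as the paper: both use the dual AGW identity $\lambda\circ f^{-1}=g^{-1}\circ\bar{\lambda}$ to compute $f_1(f^{-1}(x))=x-h(g^{-1}(\bar{\lambda}(x)))$, then feed these two values into $F(x)=\phi(f_1(x))+\bar{\phi}(\lambda(x))$ and invert $F$. Your write-up is in fact slightly more careful than the paper's, since you explicitly invoke Lemma~\ref{AGW} to justify that $g$ is bijective before using $g^{-1}$.
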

\begin{proof}
Given that  $f(x)$ is a permutation polynomial over $\gf_q$ and $f^{-1}(x)$  is the compositional inverse of $f(x)$, we conclude  $f(x)\circ f^{-1}(x)=x$ and  $\lambda(x) \circ f^{-1}(x)= g^{-1}(x)\circ \bar{\lambda}(x)$ by Lemma \ref{agwdual}. 
Then we obtain the system of  the equations 
	\begin{equation*}
		\begin{cases}
			f(x)\circ f^{-1}(x)&=\left(f_1(x)+h(\lambda(x))\right)\circ f^{-1}(x);\\
			\lambda(x) \circ f^{-1}(x)&= g^{-1}(x)\circ \bar{\lambda}(x),
		\end{cases}
	\end{equation*}
	or, equivalently, 
	\begin{equation*}
		\begin{cases}
			f_1(x) \circ f^{-1}(x)&=\,\,	x-h(g^{-1}(\bar{\lambda}(x));\\
			\lambda(x) \circ f^{-1}(x)&=\,\, g^{-1}(x)\circ \bar{\lambda}(x),
		\end{cases}
	\end{equation*}
Consequently, we have 
	\begin{equation}\label{agwf-2eq1}
		\begin{cases}
			\phi(x)\circ f_1(x) \circ f^{-1}(x)&=\,\,	\phi(x)\circ\left(x-h(g^{-1}(\bar{\lambda}(x))\right);\\
			\bar{\phi}(x)\circ	\lambda(x) \circ f^{-1}(x)&=\,\,\bar{\phi}(x)\circ g^{-1}(x)\circ \bar{\lambda}(x).
		\end{cases}
	\end{equation}
	By adding the equations of the system \eqref{agwf-2eq1}, we obtain 
	$$\left(	\phi(f_1(x)) +\bar{\phi}(\lambda(x))\right)\circ  f^{-1}(x)=\phi\left(x-h(g^{-1}(\bar{\lambda}(x))\right)+\bar{\phi}( g^{-1}(\bar{\lambda}(x))).$$
	This implies the desired result, because $\phi(x)\circ f_1(x)+\bar{\phi}(x)\circ \lambda(x)$ permutes $\gf_q.$
\end{proof}

The following lemma is crucial in this paper, which will be frequently used in Sections 3 and 4.
\begin{lemma}\label{th1}
	For a  prime $p$ and $1\leq i \leq k,$    let   $n, m, s_i, t_i$ be  positive integers  and $b_i\in \gf_{p^m}.$ Assume that $f_1(x)$ is a polynomial over $\gf_{p^{mn}}$ and $\varphi(x)$ is a polynomial over $\gf_{p^m}$ satisfying the following conditions:\\ 
	(i) $\Tr_m^{mn}(x) \circ f_1(x)=\varphi(x) \circ \Tr_m^{mn}(x);$\\ 
	(ii) For any $a \in \gf_{p^m},$ $f_1(x)$ is injective on $\Tr_m^{mn}(a)^{-1}.$\\
	Then the polynomial 
	$$f(x)=\sum_{i=1}^kb_i\left(\Tr_m^{mn}(x)^{t_i}+\delta\right)^{s_i}+f_1(x)$$
	permutes $\gf_{p^{mn}}$ if and only if 
	$$g(x)=\sum_{j=0}^{n-1}\sum_{i=1}^kb_i(x^{t_i}+\delta)^{s_ip^{mj}}+\varphi(x)$$ permutes $\gf_{p^m}.$
	Moreover, if $f(x)$ is a permutation polynomial over $\gf_{p^{mn}}$ and  there exist two  polynomials $\phi(x)$ and  $\bar{\phi}(x)$ over $\gf_{p^{nm}}$ such that $\phi(x)\circ f_1(x)+\bar{\phi}(x)\circ \Tr_m^{nm}(x)$ permutes $\gf_{p^{nm}},$ then the compositional inverse of $f(x)$ over $\gf_{p^{mn}}$ is
	$$f^{-1}(x)=(\phi(f_1) +\bar{\phi}(\lambda))^{-1}\circ \left(\phi (x-h(g^{-1}(\Tr_m^{nm}(x))) )+\bar{\phi}(g^{-1}(\Tr_m^{nm}(x))) \right),$$  
	where $h(x)=\sum_{i=1}^kb_i(x^{t_i}+\delta)$, $g^{-1}(x)$  and $(\phi(f_1(x)) +\bar{\phi}(\lambda(x)))^{-1}$ are the compositional inverses of $g(x)$ and $\phi(f_1(x)) +\bar{\phi}(\lambda(x))$, respectively

\end{lemma}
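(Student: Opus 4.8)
The plan is to apply the AGW machinery directly with the natural choices of maps, then invoke Lemma~\ref{agwf-2} for the inverse. Set $A=\gf_{p^{mn}}$, $S=\bar S=\gf_{p^m}$, $\lambda(x)=\bar\lambda(x)=\Tr_m^{mn}(x)$, and $h(x)=\sum_{i=1}^k b_i(x^{t_i}+\delta)$, so that $f(x)=f_1(x)+h(\lambda(x))$. First I would verify the intertwining relation $\bar\lambda(x)\circ f(x)=g(x)\circ\lambda(x)$; expanding the left side gives $\Tr_m^{mn}\bigl(\sum_i b_i(\Tr_m^{mn}(x)^{t_i}+\delta)^{s_i}+f_1(x)\bigr)$, and since each $b_i(\Tr_m^{mn}(x)^{t_i}+\delta)^{s_i}\in\gf_{p^m}$ already (because $b_i\in\gf_{p^m}$, $\delta\in\gf_{p^m}$ — this is implicitly required, or $\delta\in\gf_{p^{mn}}$ and one must be more careful), the trace of that term is $\sum_{j=0}^{n-1}b_i(\Tr_m^{mn}(x)^{t_i}+\delta)^{s_i p^{mj}}$ summed over $i$, while the trace of $f_1(x)$ is $\varphi(\Tr_m^{mn}(x))$ by condition~(i). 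Writing $y=\Tr_m^{mn}(x)$ this is exactly $g(y)$, which establishes $\bar\lambda\circ f=g\circ\lambda$. Since $\Tr_m^{mn}$ is surjective onto $\gf_{p^m}$, the maps $\lambda,\bar\lambda$ are surjective.

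Next I would apply Lemma~\ref{AGW} (the AGW criterion): $f$ permutes $\gf_{p^{mn}}$ if and only if $g$ permutes $\gf_{p^m}$ \emph{and} $f$ is injective on each fiber $\lambda^{-1}(a)=\Tr_m^{mn}(a)^{-1}$. But on a fixed fiber $\lambda^{-1}(a)$, the value $\Tr_m^{mn}(x)=a$ is constant, so $\sum_i b_i(\Tr_m^{mn}(x)^{t_i}+\delta)^{s_i}$ is a constant there; hence $f$ restricted to $\lambda^{-1}(a)$ equals $f_1$ plus a constant, and by condition~(ii) $f_1$ is injective on $\lambda^{-1}(a)$, so $f$ is too. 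Therefore the fiber-injectivity condition is automatic, and the AGW equivalence collapses to exactly ``$f$ permutes $\gf_{p^{mn}}$ $\iff$ $g$ permutes $\gf_{p^m}$,'' which is the first assertion.

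For the compositional inverse, I would simply invoke Lemma~\ref{agwf-2} with the maps just described: $f(x)=f_1(x)+h(\lambda(x))$ is assumed to be a permutation polynomial, and by hypothesis there exist $\phi,\bar\phi$ over $\gf_{p^{nm}}$ with $\phi(x)\circ f_1(x)+\bar\phi(x)\circ\Tr_m^{nm}(x)$ permuting $\gf_{p^{nm}}$. Lemma~\ref{agwf-2} then yields
$$f^{-1}(x)=(\phi(f_1)+\bar\phi(\lambda))^{-1}\circ\bigl(\phi(x-h(g^{-1}(\bar\lambda(x))))+\bar\phi(g^{-1}(\bar\lambda(x)))\bigr),$$
and substituting $\bar\lambda(x)=\Tr_m^{mn}(x)$ gives the displayed formula. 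The only thing to check before quoting Lemma~\ref{agwf-2} is that its hypothesis ``$g$ maps $S$ to $\bar S$'' holds here, i.e.\ that $g$ indeed sends $\gf_{p^m}$ into $\gf_{p^m}$ — this is clear since $b_i,\delta\in\gf_{p^m}$ and $\varphi$ is a polynomial over $\gf_{p^m}$, so all terms of $g$ lie in $\gf_{p^m}$ — and that $g$ is a bijection of $\gf_{p^m}$, which is exactly the ``only if'' content just proved together with $g$ being a self-map of a finite set.

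\textbf{Main obstacle.} The genuine work is concentrated in the verification of the intertwining identity $\bar\lambda\circ f=g\circ\lambda$, specifically the trace computation $\Tr_m^{mn}\bigl((\Tr_m^{mn}(x)^{t_i}+\delta)^{s_i}\bigr)=\sum_{j=0}^{n-1}(\Tr_m^{mn}(x)^{t_i}+\delta)^{s_i p^{mj}}$; this is routine only if one is careful that $\Tr_m^{mn}(x)^{t_i}+\delta$ already lies in $\gf_{p^m}$ (which forces $\delta\in\gf_{p^m}$, or an appropriate reading of the hypotheses). Everything else — surjectivity of the trace, the fiber-injectivity reduction, and the inverse formula — is an immediate application of Lemmas~\ref{AGW} and \ref{agwf-2} and requires no new ideas.
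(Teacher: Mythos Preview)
Your approach is exactly the paper's: set up the AGW diagram with $\lambda=\bar\lambda=\Tr_m^{mn}$, verify the intertwining $\Tr_m^{mn}\circ f=g\circ\Tr_m^{mn}$, observe that the non-$f_1$ part of $f$ is constant on each trace-fiber so condition~(ii) gives fiber-injectivity, apply Lemma~\ref{AGW} for the equivalence, and then quote Lemma~\ref{agwf-2} verbatim for the inverse formula.

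One clarification on what you flag as the ``main obstacle'': there is no obstacle, and no hypothesis $\delta\in\gf_{p^m}$ is needed (indeed, throughout the paper $\delta\in\gf_{p^{mn}}$). The trace computation is simply
\[
\Tr_m^{mn}\!\bigl(b_i(\Tr_m^{mn}(x)^{t_i}+\delta)^{s_i}\bigr)=\sum_{j=0}^{n-1}\bigl(b_i(\Tr_m^{mn}(x)^{t_i}+\delta)^{s_i}\bigr)^{p^{mj}}
= b_i\sum_{j=0}^{n-1}(\Tr_m^{mn}(x)^{t_i}+\delta)^{s_ip^{mj}},
\]
using only that $b_i\in\gf_{p^m}$ to pull it through the Frobenius; the individual summands need not lie in $\gf_{p^m}$, but their sum is a trace from $\gf_{p^{mn}}$ and hence does. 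The same observation shows $g$ maps $\gf_{p^m}$ into itself even when $\delta\in\gf_{p^{mn}}$. (Also, the correct $h$ is $h(x)=\sum_i b_i(x^{t_i}+\delta)^{s_i}$; the missing exponent in the statement is a typo, and the paper's own proof uses the corrected form.)
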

\begin{proof}
	Since $\Tr_m^{mn}(x)$ is  the linearized polynomial over $\gf_{p^{mn}}$ and for $1\leq i \leq k,$ $b_i\in \gf_{p^m}, $ and since moreover $\Tr_m^{mn}(x) \circ f_1(x)=\varphi(x) \circ \Tr_m^{mn}(x),$ we have 
	\begin{align*}
		\Tr_m^{mn}(x)(x)\circ f(x)=&\,\Tr_m^{mn}(x) \circ \sum_{i=1}^kb_i\left(\Tr_m^{mn}(x)^{t_i}+\delta\right)^{s_i}+\Tr_m^{mn}(x) \circ f_1(x)\\
		=&\,\left(\sum_{j=0}^{n-1}\sum_{i=1}^kb_i(x^{t_i}+\delta)^{s_ip^{mj}}+\varphi(x)\right) \circ \Tr_m^{mn}(x)\\
		=&\,g(x)\circ \Tr_m^{mn}(x),
	\end{align*}
	where $g(x)=\sum_{j=0}^{n-1}\sum_{i=1}^kb_i(x^{t_i}+\delta)^{s_ip^{mj}}+\varphi(x).$
	Note that the trace function $ \Tr_m^{mn}(x)$ is a function from $\gf_{p^{mn}}$ onto $\gf_{p^m}.$ 
	For any $a \in \gf_{p^m}$, $f(x)$ is injective on $ \Tr_m^{mn}(a)^{-1}$ because $f_1(x)$ is injective on $\Tr_m^{mn}(a)^{-1}.$
	It follows from Lemma \ref{AGW} that $f(x)$ permutes $\gf_{p^{mn}}$ if and only if $g(x)$ permutes $\gf_{p^{m}}.$

	Furthermore, if $f(x)$ is permutation polynomials over $\gf_{p^{mn}}$ and  there exist two  polynomials $\phi(x)$ and  $\bar{\phi}(x)$ over $\gf_{p^{mn}}$ such that $\phi(x)\circ f_1(x)+\bar{\phi}(x)\circ \Tr_m^{nm}(x)$ permutes $\gf_{p^{nm}}$,  let $g^{-1}(x)$  and $(\phi(f_1(x)) +\bar{\phi}(\lambda(x)))^{-1}$ be the compositional inverses of $g(x)$ and $\phi(f_1(x)) +\bar{\phi}(\lambda(x))$, respectively.   According to  Lemma \ref{agwf-},  the compositional inverse of $f(x)$ over $\gf_{p^{mn}}$ is 
	$$f^{-1}(x)=(\phi(f_1) +\bar{\phi}(\lambda))^{-1}\circ \left(\phi (x-h(g^{-1}(\Tr_m^{nm}(x))) )+\bar{\phi}(g^{-1}(\Tr_m^{nm}(x))) \right),$$  
	where $h(x)=\sum_{i=1}^kb_i(x^{t_i}+\delta)^{s_i}.$
	This completes the proof.
\end{proof}

Drawing upon Lemma \ref{th1}, it becomes evident that acquiring  the  compositional inverse of $g(x)$ over $\gf_{p^m}$ empowers us to ascertain the  compositional inverse of $f(x)$ over $\gf_{p^{nm}}.$

We list two results about the compositional inverses of linearized permutation polynomials at last. 

For a positive integer $m, $ 
define a sequence 
$$S_{-1}=0, S_0=1, S_i=b^{2^{i-1}}S_{i-1}+a^{2^{i-1}}S_{i-2},$$ 
where $1\leq i\leq m$ and $a, b \in \gf_{2^m}^*.$ Y. Zheng, Q. Wang and W. Wei \cite{zheng2019inverses} studied  the inverse of linearized polynomal of the form $x^4+bx^2+ax$ over $\gf_{2^m}.$
\begin{lemma}\label{421} \cite[Corollary 4]{zheng2019inverses}
	Let $L(x)=x^4+bx^2+ax$, where $a ,b \in \gf_{2^m}^*$ and $m>1.$ Then $L(x)$ is a permutation polynomial over $\gf_{2^m}$ if and only if $S_m+aS_{m-2}^2=1.$ Moreover, if $L(x)$ permutes $\gf_{2^m},$ the inverse of $L(x)$ over $\gf_{2^m}$ is given by  
	$$L^{-1}(x)=\sum_{i=0}^{m-1}(S_{m-2-i}^{2^{i+1}}+a^{1-2^{i+1}}S_i)x^{2^i}.$$ 
	
\end{lemma}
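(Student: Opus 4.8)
The plan is to use that $L(x)=x^4+bx^2+ax=a\,x^{2^0}+b\,x^{2^1}+x^{2^2}$ is a $2$-linearized polynomial over $\gf_{2^m}$, so its compositional inverse modulo $x^{2^m}-x$, once it is known to exist, is again $2$-linearized: $L^{-1}(x)=\sum_{i=0}^{m-1}c_ix^{2^i}$. Substituting this ansatz into $L^{-1}\circ L=x$ and reducing exponents modulo $2^m-1$, comparison of the coefficients of $x^{2^k}$ for $k=0,\dots,m-1$ turns the identity into a single $\gf_{2^m}$-linear system $D\mathbf c=\mathbf e_0$ in the vector $\mathbf c=(c_0,\dots,c_{m-1})$, where $D$ is the transpose of the Dickson matrix of $L$ — an $m\times m$ cyclic band matrix carrying the powers $a^{2^k}$ on the main diagonal and powers of $b$ and $1$'s on its two cyclic sub-diagonals — and $\mathbf e_0$ is the first standard basis vector. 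Hence $L$ permutes $\gf_{2^m}$ exactly when $\det D\neq 0$, and in that case $\mathbf c=D^{-1}\mathbf e_0$, so the $c_i$ are, up to the factor $1/\det D$, the entries of one column of $\mathrm{adj}(D)$.

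The first main step is to evaluate $\det D$. Since entrywise squaring of $D$ conjugates it by a cyclic permutation matrix, $\det D$ is fixed by $x\mapsto x^2$, hence $\det D\in\gf_2$; so "$\det D\neq 0$" already means "$\det D=1$". To compute it I would expand the cyclic banded determinant, for instance by separating the three extra entries created by the cyclic wraparound (sitting in the corner positions) from the strictly triangular band; this produces a three-term recursion in the size of the matrix, and after the normalization that uses $a^{2^m-1}=1$ it becomes precisely $S_{-1}=0$, $S_0=1$, $S_i=b^{2^{i-1}}S_{i-1}+a^{2^{i-1}}S_{i-2}$, with the corner corrections assembling into $\det D=S_m+aS_{m-2}^2$. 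This yields the permutation criterion $S_m+aS_{m-2}^2=1$.

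The second step is to read $L^{-1}$ off $\mathrm{adj}(D)$. Each cofactor of $D$ that is needed is an $(m-1)\times(m-1)$ determinant obtained by deleting a row and a column; because $D$ is a cyclic band matrix this deletion cuts the cycle open, and the cofactor expands as a sum of two products of smaller banded determinants — one run of the recursion taken forward from the top (contributing the sequence $\{S_i\}$) and one taken backward from the bottom (contributing the twisted tail $S_{m-2-i}^{2^{i+1}}$). Collecting these, using $\det D=1$ and the Fermat identity $a^{2^m-1}=1$ to absorb the surplus powers of $a$, gives $c_i=S_{m-2-i}^{2^{i+1}}+a^{1-2^{i+1}}S_i$, which is the asserted formula for $L^{-1}$. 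One can also avoid matrices altogether: regard $c_0$ and $c_1$ as parameters, propagate $c_2,\dots,c_{m-1}$ through the homogeneous relations coming from the $k\neq 0$ equations, and then impose the two remaining (cyclic) equations; the two fundamental solutions of that recurrence are again $\{a^{1-2^{i+1}}S_i\}$ and the reversed run, and solving the resulting $2\times2$ closure system recovers both the criterion and the coefficients.

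The step I expect to be the real obstacle is the bookkeeping rather than any single idea: the Frobenius twists $2^j$ riding on the coefficients make the band matrix behave semilinearly, so one must check carefully that the relevant minors genuinely satisfy the stated "untwisted-looking" recursion, and the cyclic wraparound forces the boundary terms (the corner entries in positions $(0,m-1)$, $(0,m-2)$, $(1,m-1)$) to be tracked by hand; the genuinely small cases $m=2,3$, where the band folds over onto itself, must be checked separately. Everything else is a routine, if somewhat lengthy, determinant-and-recursion computation.
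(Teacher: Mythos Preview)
The paper does not prove this lemma at all: it is quoted verbatim as \cite[Corollary 4]{zheng2019inverses} and used as a black box, with no argument given. So there is nothing in the present paper to compare your proposal against.

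For what it is worth, your outline is the standard Dickson-matrix approach to inverting a $q$-linearized polynomial and is essentially how the cited source proceeds: one forms the $m\times m$ Dickson matrix $D_L$ of $L$, uses that $L$ permutes $\gf_{2^m}$ iff $\det D_L\neq 0$ (and, by the Frobenius symmetry you note, $\det D_L\in\gf_2$), and then reads the coefficients of $L^{-1}$ from one row of cofactors of $D_L$. The three-term recursion you write down is exactly the one that governs the banded minors, and the two ``wraparound'' corner corrections are what produce the $aS_{m-2}^2$ term in the criterion and the two summands $S_{m-2-i}^{2^{i+1}}$ and $a^{1-2^{i+1}}S_i$ in each $c_i$. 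Your caution about the Frobenius twists and the small cases $m=2,3$ is appropriate, but these are bookkeeping issues rather than genuine obstructions; the argument goes through as you describe.
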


\begin{lemma}\label{binomial}\cite[Theorem 2.1]{baofengwu2013compositional}
	Let $L_r(x)=x^{q^r}-ax$, where $a\in \gf_{q^m}^*$, and $1\leq r\leq m-1$. Then $L_r(x)$ is a permutation polynomial over $\gf_{q^m}$ if and only if the norm $N_{q^m/q^d}(a)\neq 1,$ where $d=gcd(m ,r).$ In this case, its inverse on $\gf_{q^m}$ is 
	$$L_r^{-1}(x)=\frac{N_{q^m/q^d}(a)}{1-N_{q^m/q^d}(a)}\sum_{i=0}^{m/d-1}a^{-\frac{q^{(i+1)r}-1}{q^r-1}}x^{q^{ir}}.$$
\end{lemma}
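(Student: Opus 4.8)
The plan is to treat $L_r$ as a $\gf_q$-linear endomorphism of $\gf_{q^m}$, regarded as an $m$-dimensional vector space over $\gf_q$: indeed $L_r(x)=x^{q^r}-ax$ is additive and commutes with $\gf_q$-scalars, since $c^{q^r}=c$ for $c\in\gf_q$. Hence $L_r$ permutes $\gf_{q^m}$ if and only if it is injective, i.e.\ if and only if its kernel is trivial. Writing $d=\gcd(m,r)$, $\ell=m/d$, and $r=dr'$ with $\gcd(r',\ell)=1$, I would solve the equation $y=L_r(x)$ explicitly for $x$ by a cyclic elimination; this single computation will deliver both the permutation criterion and the inverse formula at once.

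First I would apply the Frobenius maps $z\mapsto z^{q^{ir}}$ to $y=x^{q^r}-ax$ and set $u_i=x^{q^{ir}}$, obtaining the recurrence
$$u_{i+1}=a^{q^{ir}}u_i+y^{q^{ir}},\qquad i=0,1,\dots,\ell-1.$$
The key closure observation is that $u_\ell=x^{q^{\ell r}}=x^{q^{mr'}}=x=u_0$, because $x^{q^m}=x$ on $\gf_{q^m}$; thus the recurrence is genuinely cyclic of length $\ell=m/d$ (and not $m$). Telescoping the linear recurrence gives
$$u_\ell=\Big(\prod_{i=0}^{\ell-1}a^{q^{ir}}\Big)u_0+\sum_{j=0}^{\ell-1}\Big(\prod_{i=j+1}^{\ell-1}a^{q^{ir}}\Big)y^{q^{jr}},$$
and substituting $u_\ell=u_0=x$ together with $N:=\prod_{i=0}^{\ell-1}a^{q^{ir}}=a^{E}$, where $E=\sum_{i=0}^{\ell-1}q^{ir}$, yields
$$(1-N)\,x=\sum_{j=0}^{\ell-1}\Big(\prod_{i=j+1}^{\ell-1}a^{q^{ir}}\Big)y^{q^{jr}}.$$

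The step I expect to be the main obstacle is identifying $N=a^{E}$ with the norm $N_{q^m/q^d}(a)=a^{(q^m-1)/(q^d-1)}$. I would prove $E\equiv (q^m-1)/(q^d-1)\pmod{q^m-1}$ by writing $q^{ir}=(q^d)^{ir'}$ and reducing the exponent of $q^d$ modulo $\ell$ (legitimate since $(q^d)^\ell=q^m\equiv 1\pmod{q^m-1}$); because $\gcd(r',\ell)=1$, the residues $ir'\bmod\ell$ run over $\{0,\dots,\ell-1\}$, so $E\equiv\sum_{k=0}^{\ell-1}(q^d)^{k}=(q^m-1)/(q^d-1)$ and hence $N=N_{q^m/q^d}(a)$. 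With this in hand the two conclusions follow. If $N\neq 1$, then $1-N\neq 0$ and the displayed relation exhibits $x=\tfrac{1}{1-N}P(y)$ as a two-sided inverse, so $L_r$ is a permutation; simplifying each coefficient via $\prod_{i=j+1}^{\ell-1}a^{q^{ir}}=a^{E-(q^{(j+1)r}-1)/(q^r-1)}=N\,a^{-(q^{(j+1)r}-1)/(q^r-1)}$ turns $\tfrac{1}{1-N}P$ into exactly the stated formula
$$L_r^{-1}(x)=\frac{N_{q^m/q^d}(a)}{1-N_{q^m/q^d}(a)}\sum_{i=0}^{\ell-1}a^{-\frac{q^{(i+1)r}-1}{q^r-1}}x^{q^{ir}}.$$
If instead $N=1$, the relation degenerates to $P(L_r(x))=0$ for all $x$, where $P(y)=\sum_{j=0}^{\ell-1}\big(\prod_{i>j}a^{q^{ir}}\big)y^{q^{jr}}$ has leading coefficient $1$; since the exponents $jr\bmod m$ are pairwise distinct for $0\le j<\ell$, the maps $z\mapsto z^{q^{jr}}$ are distinct automorphisms, and Dedekind's lemma on linear independence of characters forces $P$ to be a nonzero function, whence $L_r$ cannot be surjective and so is not a permutation. (Alternatively, $N_{q^m/q^d}(a)=1$ produces a nonzero kernel element directly via Hilbert's Theorem~90.) This settles both directions together with the inverse formula.
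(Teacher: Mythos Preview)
The paper does not supply a proof of this lemma; it is quoted verbatim as a preliminary result from \cite{baofengwu2013compositional} and used as a black box in Sections~3 and~4. So there is no ``paper's own proof'' to compare against.

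Your argument is correct and self-contained. The cyclic telescoping of $u_{i+1}=a^{q^{ir}}u_i+y^{q^{ir}}$ with the closure $u_{m/d}=u_0$ is exactly the right mechanism, and your identification of $a^{E}$ with $N_{q^m/q^d}(a)$ via the permutation $i\mapsto ir'\bmod\ell$ is the crucial step and is handled cleanly. The coefficient simplification $\prod_{i>j}a^{q^{ir}}=N\,a^{-(q^{(j+1)r}-1)/(q^r-1)}$ is right, and the two options you give for the failure when $N=1$ both work; the Hilbert~90/kernel argument is the shorter one (nontrivial kernel of $L_r$ exists iff $a$ is a $(q^r-1)$-th power in $\gf_{q^m}^*$, iff $a^{(q^m-1)/(q^d-1)}=1$). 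One minor remark: what you actually establish is $\frac{1}{1-N}P\circ L_r=\mathrm{id}$, i.e.\ a left inverse; on a finite set this forces bijectivity and hence two-sidedness, so your phrase ``two-sided inverse'' is justified but you might make that passage explicit.
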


\section{The compositional inverses of the permutation polynomials of the form $\sum_{i=1}^kb_i\left(\Tr_m^{mn}(x)+\delta\right)^{s_i}+f_1(x)$ over finite fields}
This section analyzes the compositional inverses of permutation polynomials of the form
$$f(x)=\sum_{i=1}^kb_i\left(\Tr_m^{mn}(x)+\delta\right)^{s_i}+f_1(x)$$ over $\gf_{p^{nm}}$,
where for $1 \leq i \leq k,$  $m, n, s_i$ are positive integers, $b_i\in \gf_{p^m}$, $p$ is prime, and $\delta\in \gf_{p^{nm}}.$

As immediate consequence of Lemma \ref{th1}, we get the following two results.
\begin{theorem}\label{th31}
	For  positive integers $d, n, m $ and a  prime $p,$ let $d \mid (p^{m}+1),$ $(2p)\mid n,$ and  $\delta \in \gf_{p^{mn}}.$    Assume that $f_1(x)$ is  polynomials over $\gf_{p^{mn}}$ and $\varphi(x)$ is a polynomial over $\gf_{p^m}$ satisfying the following conditions:\\ 
	(i) $\Tr_m^{mn}(x) \circ f_1(x)=\varphi(x) \circ \Tr_m^{mn}(x);$\\ 
	(ii) For any $a \in \gf_{p^m},$ $f_1(x)$ is injective on $\Tr_m^{mn}(a)^{-1};$\\
	For $1\leq i \leq k,$ $b_i \in \gf_{p^m}$ and $s_i$ are positive integers, then $$f(x)=\sum_{i=1}^kb_i\left(\Tr_m^{mn}(x)+\delta\right)^{1+s_i(p^{nm}-1)/d}+f_1(x)$$ permutes $\gf_{p^{mn}}$ if and only if $\varphi(x)$ permutes $\gf_{p^m}.$ Moreover,  if there exist two polynomials  $\phi(x)$ and  $\bar{\phi}(x)$ such that $\phi(x)\circ f_1(x)+\bar{\phi}(x)\circ \Tr_m^{nm}(x)$ permutes $\gf_{p^{nm}}$ and  $f(x)$ permutes $\gf_{p^{nm}}$,  then  the compositional inverse of $f(x)$ is 
	
	$$f^{-1}(x)=(\phi(f_1) +\bar{\phi}(\lambda))^{-1}\circ \left(\phi (x-h(\varphi^{-1}(\Tr_m^{nm}(x))) )+\bar{\phi}(\varphi^{-1}(\Tr_m^{nm}(x))) \right),$$  
	where $h(x)=\sum_{i=1}^kb_i(x+\delta)^{1+s_i(p^{nm}-1)/d}$, $\varphi^{-1}(x)$  and $(\phi(f_1(x)) +\bar{\phi}(\lambda(x)))^{-1}$ are the compositional inverses of $\varphi(x)$ and $\phi(f_1(x)) +\bar{\phi}(\lambda(x))$, respectively.
\end{theorem}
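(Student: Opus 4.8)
The plan is to obtain Theorem \ref{th31} directly from Lemma \ref{th1} by specializing to $t_i=1$ and to the exponents $e_i:=1+s_i(p^{nm}-1)/d$. First I would check that these exponents are positive integers: since $d\mid p^m+1$ and $2\mid n$ (the latter from $(2p)\mid n$), we have $d\mid p^m+1\mid p^{2m}-1\mid p^{nm}-1$. Lemma \ref{th1} then tells us immediately that $f$ permutes $\gf_{p^{mn}}$ if and only if
$$g(x)=\sum_{j=0}^{n-1}\sum_{i=1}^k b_i\,(x+\delta)^{e_i p^{mj}}+\varphi(x)$$
permutes $\gf_{p^m}$, and it also expresses $f^{-1}$ through $g^{-1}$, $\Tr_m^{nm}$, $\phi$, $\bar\phi$ and $h(x)=\sum_i b_i(x+\delta)^{e_i}$. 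Hence the whole theorem reduces to a single claim: on $\gf_{p^m}$ the map $g$ coincides with $\varphi$. Once that is known, $g^{-1}=\varphi^{-1}$, the permutation equivalence in the statement is immediate, and substituting $g^{-1}=\varphi^{-1}$ into the inverse formula of Lemma \ref{th1} yields verbatim the claimed expression for $f^{-1}$.

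The substantive step is therefore the identity $g(x)=\varphi(x)$ for $x\in\gf_{p^m}$, i.e. $\Tr_m^{mn}\bigl((x+\delta)^{e_i}\bigr)=\sum_{j=0}^{n-1}(x+\delta)^{e_i p^{mj}}=0$ for each $i$ and every $x\in\gf_{p^m}$. I would prove this by a trace computation that uses both divisibility hypotheses. Write $z=x+\delta$ (the case $z=0$ is trivial) and $u=(p^{nm}-1)/d$, so $z^{e_i}=z\cdot z^{s_iu}$. Since $d\mid p^m+1$, the element $\zeta_i:=z^{s_iu}$ satisfies $\zeta_i^{d}=z^{s_i(p^{nm}-1)}=1$, hence $\zeta_i\in\mu_d\subseteq\gf_{p^{2m}}$ and $\zeta_i^{p^m}=\zeta_i^{-1}$. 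Using the transitivity $\Tr_m^{mn}=\Tr_m^{2m}\circ\Tr_{2m}^{mn}$ (valid since $2\mid n$) and the $\gf_{p^{2m}}$-linearity of $\Tr_{2m}^{mn}$, the factor $\zeta_i$ pulls out: $\Tr_{2m}^{mn}(z\zeta_i)=\zeta_i\,\Tr_{2m}^{mn}(z)$. Finally $(2p)\mid n$ gives $p\mid n/2$, so for $x\in\gf_{p^m}$ one has $\Tr_{2m}^{mn}(x)=(n/2)x=0$ and hence $\Tr_{2m}^{mn}(z)=\Tr_{2m}^{mn}(\delta)$, independent of $x$. Feeding these reductions back collapses $\Tr_m^{mn}(z^{e_i})$, and it is exactly the simultaneous use of $d\mid p^m+1$ and $(2p)\mid n$ that should make the resulting expression vanish on $\gf_{p^m}$.

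The main obstacle is precisely this last point: showing that the residual trace sum is genuinely $0$ on all of $\gf_{p^m}$ (not merely that its leading term is controlled), combining the two arithmetic constraints rather than just $p\mid n$. Everything else is formal — the reduction to $g$ and the shape of $f^{-1}$ are taken straight from Lemma \ref{th1} (which rests on the AGW criterion, Lemma \ref{AGW}, and on the inverse formulas in Lemmas \ref{agwf-} and \ref{agwf-2}), and once $g\equiv\varphi$ is established the "if and only if" and the formula for the compositional inverse follow with no extra argument.
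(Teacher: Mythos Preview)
Your overall strategy matches the paper's: reduce via Lemma \ref{th1} to the single claim $g\equiv\varphi$ on $\gf_{p^m}$, from which both the permutation equivalence and the inverse formula would follow formally. The obstacle you isolate at the end is not merely technical but insurmountable: the identity $g(x)=\varphi(x)$ is \emph{false} under the stated hypotheses. Take $p=2$, $m=1$, $n=4$, $d=3$, $k=b_1=s_1=1$, $f_1(x)=x$ (so $\varphi(x)=x$), and let $\delta=\alpha\in\gf_{16}$ be a root of $X^4+X+1$. The exponent is $1+(2^4-1)/3=6$, and for $x\in\gf_2$ one has $g(x)-\varphi(x)=\Tr_1^4\bigl((x+\alpha)^6\bigr)$; a direct check gives $\Tr_1^4(\alpha^6)=\alpha^3+\alpha^6+\alpha^9+\alpha^{12}=1$, so $g(0)=1\neq 0=\varphi(0)$. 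In fact here $g(x)=x+1$, hence $g^{-1}\neq\varphi^{-1}$, and plugging $\varphi^{-1}$ into the inverse formula of Lemma \ref{th1} yields $f^{-1}=f$. But $f(0)=\alpha^6=\alpha^3+\alpha^2$ has $\Tr_1^4$ equal to $1$, so $f(f(0))=(1+\alpha)^6+\alpha^3+\alpha^2=(\alpha^3+\alpha)+(\alpha^3+\alpha^2)=\alpha^2+\alpha\neq 0$: the map $f$ is not an involution, and the theorem's formula for $f^{-1}$ is incorrect for this instance.

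The paper's own argument fails at exactly the same place, though by a different mis-step. After splitting the trace sum according to the parity of $j$, it asserts $\sum_{l=0}^{n/2-1}(x+\delta^{p^{2lm}})=p\sum_{l=0}^{n/(2p)-1}(x+\delta^{p^{2lm}})=0$ from $(2p)\mid n$. That rewriting would require $\delta^{p^{2lm}}$ to be periodic in $l$ with period $n/(2p)$, i.e.\ $\delta\in\gf_{p^{nm/p}}$, which is nowhere assumed; in the example above this half-sum is $\alpha+\alpha^4=1\neq 0$. Your trace-transitivity route is cleaner and correctly reduces the question to whether $\Tr_m^{2m}\bigl(\zeta_i(x)\,\Tr_{2m}^{mn}(\delta)\bigr)=0$, but since $\zeta_i(x)=(x+\delta)^{s_iu}$ genuinely varies with $x$ and $\Tr_{2m}^{mn}(\delta)$ need not vanish, no combination of the hypotheses $d\mid p^m+1$ and $(2p)\mid n$ will force this to zero. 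In short, neither your argument nor the paper's can be completed, because the target identity $g=\varphi$ simply does not hold.
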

\begin{proof}
	According to Lemma \ref{th1}, $f(x)$ permutes $\gf_{p^{nm}}$ if and only if  $g(x)=\sum_{j=0}^{n-1}\sum_{i=1}^kb_i(x+\delta)^{\left({1+s_i(p^{nm}-1)/d}\right)p^{mj}}+\varphi(x)$ permutes $\gf_{p^m}.$
	Since $2\mid n$ and $d \mid (p^{m}+1),$ and since moreover $x^{p^{nm}}=x$ for any $x \in \gf_{p^{nm}},$ 
	we have $$(x^{\frac{p^{nm}-1}{d}})^{p^{2lm}}=x^{\frac{p^{nm}-1}{d}} \,\, \text{and} \,\, (x^{\frac{p^{nm}-1}{d}})^{p^{(2l+1)m}}=x^{\frac{p^m(p^{nm}-1)}{d}} $$ for any  $l \in\mathbb{Z}^+.$ This implies   
	\begin{align}\label{1eqht31}
		g(x)=&\,\sum_{j=0}^{n-1}\sum_{i=1}^kb_i(x+\delta)^{\left({1+s_i(p^{nm}-1)/d}\right)p^{mj}}+\varphi(x)\nonumber\\
		=&\,\sum_{i=1}^kb_i(x+\delta)^{\frac{s_i(p^{nm}-1)}{d}}\sum_{l=0}^{\frac{n}{2}-1}(x+\delta^{p^{2lm}})\nonumber\\
		&\,+\sum_{i=1}^kb_i(x+\delta)^{\frac{s_ip^m(p^{nm}-1)}{d}}\sum_{l=0}^{\frac{n}{2}-1}(x+\delta^{p^{(2l+1)m}})+\varphi(x).
	\end{align}
	Note that if $(2p) \mid n,$ then 
	$$\sum_{l=0}^{\frac{n}{2}-1}(x+\delta^{p^{2lm}})=
	p\sum_{l=0}^{\frac{n}{2p}-1}(x+\delta^{p^{2lm}})=0,$$ and
	$$\sum_{l=0}^{\frac{n}{2}-1}(x+\delta^{p^{(2l+1)m}})=
	p\sum_{l=0}^{\frac{n}{2p}-1}(x+\delta^{p^{(2l+1)m}})=0.$$
	Therefore, by Eq.\eqref{1eqht31}, $g(x)=\varphi(x).$ This implies that $f(x)$ permutes $\gf_{p^{mn}}$ if and only if $\varphi(x)$ permutes $\gf_{p^m}.$ 
	We  immediately  conclude the second result by Lemma \ref{th1}.
	This completes the proof.
\end{proof}
\begin{example}
	In Theorem \ref{th31}, for the case where $k=1,$ $b_1=1,$ $p=2$, $n=4,$  $d=q+1$,
	and $f_1(x)=x, $ we obtain Theorem 3 in \cite{zeng2015permutation}. Furthermore, by setting  $\phi(x)=x$ and $\bar{\phi}(x)=0$ in Lemma \ref{th1}, we establish 
	$\phi(x)\circ f_1(x)+\bar{\phi}(x)\circ \Tr_m^{nm}(x)=x.$ Hence,  the compositional inverse of $f(x)=(\Tr_m^{4m}(x)+\delta)^{1+s_1(2^{4m}-1)/(2^m+1)}+x$ over $\gf_{2^{4m}}$ is given by 
	$$f^{-1}(x)=x+(\Tr_m^{4m}(x)+\delta)^{1+s_1(2^{4m}-1)/(2^m+1)}=f(x),$$ indicating  that $f(x)$ is an involution over $\gf_{2^{4m}}.$
\end{example}

\begin{theorem}\label{th21}
	For  positive integers $d, n, m $ and a  prime $p,$ let $d \mid (p^{m}-1),$ $\delta \in \gf_{p^{mn}} $ with $\Tr_m^{nm}(\delta)=0$, and  $p$ be a divisor of $n.$   Assume that $f_1(x)$ is  polynomials over $\gf_{p^{mn}}$ and $\varphi(x)$ is a polynomial over $\gf_{p^m}$ satisfying the following conditions:\\ 
	(i) $\Tr_m^{mn}(x) \circ f_1(x)=\varphi(x) \circ \Tr_m^{mn}(x);$\\ 
	(ii) For any $a \in \gf_{p^m},$ $f_1(x)$ is injective on $\Tr_m^{mn}(a)^{-1};$\\
	For $1\leq i \leq k,$ $b_i \in \gf_{p^m}$ and $s_i$ are positive integers, then $$f(x)=\sum_{i=1}^kb_i\left(\Tr_m^{mn}(x)+\delta\right)^{1+s_i(p^{nm}-1)/d}+f_1(x)$$ permutes $\gf_{p^{mn}}$ if and only if $\varphi(x)$ permutes $\gf_{p^m}.$ Moreover,  if there exist two polynomials  $\phi(x)$ and  $\bar{\phi}(x)$ such that $\phi(x)\circ f_1(x)+\bar{\phi}(x)\circ \Tr_m^{nm}(x)$ permutes $\gf_{p^{nm}}$ and  $f(x)$ permutes $\gf_{p^{nm}}$,  then  the compositional inverse of $f(x)$  over $\gf_{p^{nm}}$ is 
	
	$$f^{-1}(x)=(\phi(f_1) +\bar{\phi}(\lambda))^{-1}\circ \left(\phi (x-h(\varphi^{-1}(\Tr_m^{nm}(x))) )+\bar{\phi}(\varphi^{-1}(\Tr_m^{nm}(x))) \right),$$  
	where $h(x)=\sum_{i=1}^kb_i(x+\delta)^{1+s_i(p^{nm}-1)/d}$, $\varphi^{-1}(x)$  and $(\phi(f_1(x)) +\bar{\phi}(\lambda(x)))^{-1}$ are the compositional inverses of $\varphi(x)$ and $\phi(f_1(x)) +\bar{\phi}(\lambda(x))$, respectively.
	
\end{theorem}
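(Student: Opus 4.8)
The plan is to invoke Lemma \ref{th1} with $t_i=1$ and each $s_i$ replaced by $1+s_i(p^{nm}-1)/d$, which reduces the permutation question for
$$f(x)=\sum_{i=1}^kb_i\left(\Tr_m^{mn}(x)+\delta\right)^{1+s_i(p^{nm}-1)/d}+f_1(x)$$
over $\gf_{p^{mn}}$ to the permutation question over $\gf_{p^m}$ for
$$g(x)=\sum_{j=0}^{n-1}\sum_{i=1}^kb_i(x+\delta)^{(1+s_i(p^{nm}-1)/d)p^{mj}}+\varphi(x).$$
The core of the argument is to show that, as a function on $\gf_{p^m}$, $g$ coincides with $\varphi$; once this is done, $g$ permutes $\gf_{p^m}$ if and only if $\varphi$ does, so $f$ permutes $\gf_{p^{mn}}$ if and only if $\varphi$ permutes $\gf_{p^m}$, and the inverse formula is then read off directly from the second assertion of Lemma \ref{th1}, with $h(x)=\sum_{i=1}^kb_i(x+\delta)^{1+s_i(p^{nm}-1)/d}$ and $g^{-1}=\varphi^{-1}$.

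To establish $g=\varphi$ on $\gf_{p^m}$, I would fix $x\in\gf_{p^m}$ and set $u=x+\delta\in\gf_{p^{mn}}$. If $u=0$, every term $u^{(1+s_i(p^{nm}-1)/d)p^{mj}}$ vanishes since the exponent is a positive integer, so $g(x)=\varphi(x)$. If $u\neq0$, then $\zeta:=u^{(p^{nm}-1)/d}$ is a $d$-th root of unity in $\gf_{p^{mn}}$; since $d\mid p^m-1$, the group of $d$-th roots of unity is contained in $\gf_{p^m}$, hence $\zeta^{p^{mj}}=\zeta$ for every $j$. Using $x^{p^{mj}}=x$ this gives
$$u^{(1+s_i(p^{nm}-1)/d)p^{mj}}=u^{p^{mj}}\,\zeta^{s_i p^{mj}}=\bigl(x+\delta^{p^{mj}}\bigr)\zeta^{s_i},$$
so that $\sum_{j=0}^{n-1}u^{(1+s_i(p^{nm}-1)/d)p^{mj}}=\zeta^{s_i}\bigl(nx+\Tr_m^{nm}(\delta)\bigr)$. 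This is $0$ because $p\mid n$ forces $nx=0$ in characteristic $p$, and $\Tr_m^{nm}(\delta)=0$ by hypothesis. Hence the whole double sum in $g(x)$ is zero and $g(x)=\varphi(x)$, completing the reduction.

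With $g=\varphi$ on $\gf_{p^m}$ in hand, the permutation criterion follows at once from Lemma \ref{th1}, and under the stated additional hypothesis that $\phi(f_1(x))+\bar{\phi}(\Tr_m^{nm}(x))$ permutes $\gf_{p^{nm}}$ (and $f$ permutes $\gf_{p^{nm}}$), the same lemma yields the asserted expression for $f^{-1}(x)$. The only delicate point is the identity $g=\varphi$: one must treat the value $u=0$ separately, must check that $(p^{nm}-1)/d$ is genuinely an integer, and—most importantly—must justify that the Frobenius $y\mapsto y^{p^{mj}}$ fixes the $d$-th root of unity $\zeta$, which is exactly where the hypothesis $d\mid p^m-1$ (as opposed to $d\mid p^m+1$ together with the even/odd splitting used in Theorem \ref{th31}) is needed; the remainder is a routine application of the earlier lemmas.
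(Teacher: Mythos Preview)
Your proposal is correct and follows essentially the same route as the paper: invoke Lemma \ref{th1}, then use $d\mid p^m-1$ to see that $(x+\delta)^{(p^{nm}-1)/d}$ is fixed by $y\mapsto y^{p^{mj}}$, factor the double sum as $\sum_i b_i(x+\delta)^{s_i(p^{nm}-1)/d}\bigl(nx+\Tr_m^{nm}(\delta)\bigr)$, and kill this with $p\mid n$ and $\Tr_m^{nm}(\delta)=0$ to obtain $g=\varphi$. The only cosmetic difference is that you argue pointwise on $\gf_{p^m}$ (splitting off the case $u=0$), whereas the paper writes the computation as a single polynomial identity modulo $x^{p^{nm}}-x$; both are fine.
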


\begin{proof}
It follows from  Lemma \ref{th1} that $f(x)$ permutes $\gf_{p^{nm}}$ if and only if  $g(x)=\sum_{j=0}^{n-1}\sum_{i=1}^kb_i(x+\delta)^{\left({1+s_i(p^{nm}-1)/d}\right)p^{mj}}+\varphi(x)$ permutes $\gf_{p^m}.$
	Moreover, because $d \mid (p^{m}-1)$ and $x^{p^{nm}}=x$ for any $x\in \gf_{p^{nm}}$, we have $x^{\frac{p^{nm}-1}{d}\cdot p^{mj}}=x^{\frac{p^{nm}-1}{d}}$ for any non-negative integer $j$.  Therefore, we have 
	\begin{align*}
		g(x)=&\,\sum_{j=0}^{n-1}\sum_{i=1}^kb_i(x+\delta)^{\left(s_i(p^{nm}-1)/d+1\right)p^{mj}}+\varphi(x)\\
		=&\, \sum_{i=1}^kb_i(x+\delta)^{s_i(p^{nm}-1)/d}\sum_{j=0}^{n-1}(x+\delta)^{p^{mj}}+\varphi(x)\\
		=&\, \sum_{i=1}^kb_i(x+\delta)^{s_i(p^{nm}-1)/d}(nx+\Tr_m^{nm}(\delta))+\varphi(x)\\
		=&\,\varphi(x).
	\end{align*}
	Hence, we get the desired result by Lemma \ref{th1}. 
\end{proof}

We provide two examples of permutation polynomials that were extensively studied in a previous work \cite{wu2022further} in corollary 1 regarding their permutation properties. Our focus will be on examining their compositional inverses. 

\begin{example}
Considering the notations defined in 	
	 Theorem \ref{th21}, with  $f_1(x)=\Tr_m^{nm}(x)+x$,  the polynomial  $$\sum_{i=1}^kb_i\left(\Tr_m^{mn}(x)+\delta\right)^{s_i(p^{nm}-1)/d+1}+\Tr_m^{nm}(x)+x$$  permutes $\gf_{p^{nm}}.$ 
Furthermore, utilizing $\phi(x)=x$ and $\bar{\phi}(x)=-x$ in Lemma \ref{th1}, we establish the equation
	$\phi(x)\circ f_1(x)+\bar{\phi}(x)\circ \Tr_m^{nm}(x)=x.$ Consequently,  the compositional inverse of $f(x)$ over $\gf_{p^{mn}}$ is given by 
	$$f^{-1}(x)=-\Tr_m^{mn}(x)+x-\sum_{i=1}^kb_i\left(\Tr_m^{mn}(x)+\delta\right)^{s_i(p^{nm}-1)/d+1}$$ according to Theorem \ref{th21}.	 
\end{example} 

\begin{example}
Under the definitions provided in  Theorem \ref{th21}, assuming 
	 $f_1(x)=x$,  the polynomial  $$\sum_{i=1}^kb_i\left(\Tr_m^{mn}(x)+\delta\right)^{s_i(p^{nm}-1)/d+1}+x$$  permutes $\gf_{p^{nm}}.$ 
Additionally, employing  $\phi(x)=x$ and $\bar{\phi}(x)=0$ in Lemma \ref{th1}, we derive the equation 
	$\phi(x)\circ f_1(x)+\bar{\phi}(x)\circ \Tr_m^{nm}(x)=x.$ Consequently,  the compositional inverse of $f(x)$ over $\gf_{p^{mn}}$ can be expressed as  
	$$f^{-1}(x)=x-\sum_{i=1}^kb_i\left(\Tr_m^{mn}(x)+\delta\right)^{s_i(p^{nm}-1)/d+1}$$ by Theorem \ref{th21}.		
\end{example}

Next, we will consider the  compositional inverses of permutation polynomials of the form
$$f(x)=\left(\Tr_m^{mn}(x)+\delta\right)^{s_i}+f_1(x)$$ over $\gf_{p^{nm}}$ with $\delta \in \gf_{p^{nm}}$ in the following four theorems.

The authors in \cite{wu2022further} explored the permutation properties exhibited by the polynomial $(\Tr_m^{3m}(x)+\delta)^{q^2+q+2}+\Tr_{m}^{3m}(x)^{2^l}+x^{2^l}$ over $\gf_{q^3}$, where $q=2^m$ with $m$ being a positive integer,  $\delta \in \gf_{q^3}$, and $l$ is a non-negative integer. In the subsequent theorem, we aim to  determine the compositional inverse of permutation polynomial in this particular form. 

\begin{theorem}
	For a positive integer $m$ and a non-negative integer $l$, let $q=2^m,$ $\delta \in \gf_{q^3},$ $A=\Tr_m^{3m}(\delta^{2+q}+\delta^{2+q^2}),$  $B=\Tr_m^{3m}(\delta^2+\delta^{1+q}), $ and   $D=\delta^{q^2+q+1}\Tr_m^{3m}(\delta).$   Assume that  $$f(x)=(\Tr_m^{3m}(x)+\delta)^{q^2+q+2}+\Tr_{m}^{3m}(x)^{2^l}+x^{2^l}$$ permutes $ \gf_{q^3} .$ \\
	If $A=0$ and $B=0,$ then the compositional inverse of $f(x)$ over $\gf_{q^3}$ is 
	$$f^{-1}(x)=x^{q^3/2^l}+\left((\Tr_{m}^{3m}(x)+D)^{q/4}+\delta\right)^{(q^2+q+2)q^3/2^l}+(\Tr_{m}^{3m}(x)+D)^{q/4}.$$
	If $B=0$ and $A\neq0 $
	is not a cubic of some element in $\gf_q$, then the compositional inverse of $f(x)$ over $\gf_{q^3}$ is{\footnotesize
		\begin{align*}
			f^{-1}(x)
			%=&\, x^{q^3/2^l} \circ \left(\left(x-(g^{-1}(\Tr_{m}^{3m}(x))+\delta)^{q^2+q+2}\right)+x^{2^l}\circ g^{-1}(\Tr_{m}^{3m}(x)) \right)\\
			=&\, \left(\delta+\frac{N_{q^m/q^d}(A)}{1+N_{q^m/q^d}(A)}\sum_{i=0}^{m/d-1}A^{-\frac{4^{i+1}-1}{3}}(\Tr_{m}^{3m}(x)+D)^{4^i}\right)^{(q^2+q+2)q^3/2^l}\\
			&\,+x^{q^3/2^l}+\frac{N_{q^m/q^d}(A)}{1+N_{q^m/q^d}(A)}\sum_{i=0}^{m/d-1}A^{-\frac{4^{i+1}-1}{3}}(\Tr_{m}^{3m}(x)+D)^{4^i},
		\end{align*}
	} where  $d=\gcd(m, 2).$\\
	If $AB\neq0,$  then the compositional inverse of $f(x)$ over $\gf_{q^3}$ is 
	{\footnotesize
		\begin{align*}
			f^{-1}(x)
			%=&\, x^{q^3/2^l} \circ \left(\left(x-(g^{-1}(\Tr_{m}^{3m}(x))+\delta)^{q^2+q+2}\right)+x^{2^l}\circ g^{-1}(\Tr_{m}^{3m}(x)) \right)\\
			=&\, \left(\delta+\sum_{i=0}^{m-1}(S_{m-2-i}^{2^{i+1}}+A^{1-2^{i+1}}S_i)(\Tr_m^{nm}(x)+D)^{2^i}\right)^{(q^2+q+2)q^3/2^l}\\
			&\,+x^{q^3/2^l}+\sum_{i=0}^{m-1}(S_{m-2-i}^{2^{i+1}}+A^{1-2^{i+1}}S_i)(\Tr_m^{nm}(x)+D)^{2^i},
		\end{align*}
	} where $S_i$ is a sequence with $S_{-1}=0, S_0=1, S_i=B^{2^{i-1}}S_{i-1}+A^{2^{i-1}}S_{i-2}.$
\end{theorem}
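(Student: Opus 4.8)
The plan is to invoke Lemma \ref{th1} with the specific data of this theorem and then compute the required compositional inverses concretely. Here we have $k=1$, $b_1=1$, $t_1=1$, $s_1=q^2+q+2$, $p=2$, $n=3$, and $f_1(x)=\Tr_m^{3m}(x)^{2^l}+x^{2^l}$, while $\varphi(x)=\Tr_m^{3m}(f_1(x))$ read off from condition (i); one checks that $f_1$ satisfies (i) and (ii) so that Lemma \ref{th1} applies. The associated map on $\gf_{q^3}$ reduces via Lemma \ref{th1} to the map $g(x)=\sum_{j=0}^{2}(x+\delta)^{(q^2+q+2)2^{mj}}+\varphi(x)$ on $\gf_q$. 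So the first step is to expand $g(x)$: since $x\in\gf_q$ we have $x^{q}=x$, and expanding $(x+\delta)^{q^2+q+2}=(x+\delta)^2(x+\delta)^{q}(x+\delta)^{q^2}$ and summing its $2^{mj}$-th powers over $j=0,1,2$ will collapse into a polynomial in $x$ over $\gf_q$ whose coefficients are the trace expressions $A=\Tr_m^{3m}(\delta^{2+q}+\delta^{2+q^2})$, $B=\Tr_m^{3m}(\delta^2+\delta^{1+q})$ and the constant term involving $\Tr_m^{3m}(\delta^{q^2+q+2})=D$-type quantities. I expect $g(x)$ to simplify to something of the shape $x^4+Bx^2+Ax+(\text{const})+\varphi(x)$, i.e. a linearized polynomial (plus constant) in $x$ after the trace coefficients are identified; this is the computational heart of the argument and the main obstacle, since it requires careful bookkeeping of Frobenius orbits modulo $3$ and the fact that $n=3$ makes $\sum_{j=0}^2$ a full trace.

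Once $g(x)$ is pinned down, the second step is to read off $g^{-1}(x)$ in each of the three regimes. Absorbing the constant term, write $g(x)=L(x-c)$-type where $L$ is one of $x^4+Bx^2+Ax$ or its degenerations. When $A=0$ and $B=0$, $g$ is (a shift of) $x^4$, whose inverse over $\gf_q$ is $x\mapsto x^{q/4}$ (since $4\mid q$ as $q=2^m$ with $m\ge 2$; note the theorem implicitly needs $m\ge2$ here), giving the $(\cdot)^{q/4}$ that appears. When $B=0$, $A\ne0$, $g$ is a shift of the binomial $x^4+Ax=x^{q^d\cdot?}$... more precisely $x^{2^2}-Ax$ up to sign (char $2$), so Lemma \ref{binomial} with $q\to 2$, $r=2$, $d=\gcd(m,2)$ applies — the condition that $A$ is not a cube of an element of $\gf_q$ is exactly $N_{q^m/q^d}(A)\ne1$ rewritten, and the stated $\sum_{i=0}^{m/d-1}A^{-(4^{i+1}-1)/3}(\cdot)^{4^i}$ is precisely the formula from Lemma \ref{binomial}. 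When $AB\ne0$, $g$ is a shift of $x^4+Bx^2+Ax$ and Lemma \ref{421} delivers $g^{-1}$ in terms of the sequence $S_i$ with $S_i=B^{2^{i-1}}S_{i-1}+A^{2^{i-1}}S_{i-2}$, which is exactly the sequence named in the theorem; here one also needs to verify the permutation condition $S_m+AS_{m-2}^2=1$, but since we are \emph{assuming} $f$ (hence $g$) permutes, this is automatic.

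The third step is to feed $g^{-1}$ into Lemma \ref{th1}. Take $\phi(x)=x$ and $\bar\phi(x)=0$; then $\phi(f_1(x))+\bar\phi(\lambda(x))=f_1(x)=\Tr_m^{3m}(x)^{2^l}+x^{2^l}$, which is a permutation polynomial of $\gf_{q^3}$ — indeed it equals $(\Tr_m^{3m}(x)+x)^{2^l}$ composed appropriately... one checks directly that $y\mapsto\Tr_m^{3m}(y)+y$ permutes $\gf_{q^3}$ (its kernel as a linear map is trivial when $\Tr_m^{3m}$ restricted suitably, equivalently $1+3\cdot 1\ne 0$ fails in char $2$... so more carefully: $\Tr_m^{3m}(x)^{2^l}+x^{2^l}=(\Tr_m^{3m}(x)+x)^{2^l}$ and $x\mapsto\Tr_m^{3m}(x)+x$ is $\gf_2$-linear with the property that applying $\Tr_m^{3m}$ gives $3\Tr=\Tr$ in char $2$, and injectivity follows on each fibre), so its compositional inverse $(\phi(f_1)+\bar\phi(\lambda))^{-1}=f_1^{-1}$ exists and is $x\mapsto(\cdot)^{q^3/2^l}$ precomposed with the inverse of $y\mapsto\Tr_m^{3m}(y)+y$; since $\Tr_m^{3m}\circ(\mathrm{id}+\Tr_m^{3m})$ tracks back to the identity on $\gf_q$ the net effect is that $f_1^{-1}(x)=(\text{something})^{q^3/2^l}$, matching the $x^{q^3/2^l}$ and $(\cdots)^{q^3/2^l}$ terms in each displayed formula. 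Substituting $h(x)=(x+\delta)^{q^2+q+2}$ and $g^{-1}(\Tr_m^{3m}(x))$ from step two into
$$f^{-1}(x)=f_1^{-1}\!\left(x-h\big(g^{-1}(\Tr_m^{3m}(x))\big)\right)$$
and distributing the $2^l$-th root through the sum yields the three claimed formulas. The only genuine work, as I said, is step one — the explicit collapse of $g(x)$ onto $x^4+Bx^2+Ax+\text{const}$ together with the identification of the constant shift that produces the "$+D$" inside the trace arguments; everything after that is assembling the quoted inverse formulas.
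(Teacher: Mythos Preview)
Your plan is sound through the computation of $g(x)$: the paper indeed shows that $\varphi(x)=0$ (since $\Tr_m^{3m}\bigl(\Tr_m^{3m}(x)^{2^l}+x^{2^l}\bigr)=\Tr_m^{3m}(x)^{2^l}+\Tr_m^{3m}(x)^{2^l}=0$ in characteristic $2$) and that $g(x)=x^4+Bx^2+Ax+D$, and then inverts $g$ in the three regimes via Lemmas~\ref{binomial} and~\ref{421} exactly as you describe.

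The genuine gap is in your step three. The map $f_1(x)=\Tr_m^{3m}(x)^{2^l}+x^{2^l}=(\Tr_m^{3m}(x)+x)^{2^l}$ is \emph{not} a permutation of $\gf_{q^3}$: the $\gf_2$-linear map $x\mapsto \Tr_m^{3m}(x)+x$ vanishes on all of $\gf_q$ (for $x\in\gf_q$ one has $\Tr_m^{3m}(x)=3x=x$ in characteristic $2$), so its kernel is $\gf_q\neq\{0\}$. Hence your choice $\phi=\mathrm{id}$, $\bar\phi=0$ does not satisfy the hypothesis of Lemma~\ref{th1}, and the formula $f^{-1}(x)=f_1^{-1}\bigl(x-h(g^{-1}(\Tr_m^{3m}(x)))\bigr)$ is meaningless because $f_1^{-1}$ does not exist. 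You flag the problem yourself (``$1+3\cdot1\neq0$ fails in char $2$'') but then retreat to ``injectivity on each fibre'', which is only condition~(ii) of the lemma, not global bijectivity.

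The paper's fix is to take $\phi(x)=x$ and $\bar\phi(x)=x^{2^l}$, so that $\phi(f_1(x))+\bar\phi(\Tr_m^{3m}(x))=x^{2^l}$, which \emph{is} a permutation of $\gf_{q^3}$ with inverse $x\mapsto x^{q^3/2^l}$. This nonzero $\bar\phi$ contributes the extra summand $\bar\phi\bigl(g^{-1}(\Tr_m^{3m}(x))\bigr)=\bigl(g^{-1}(\Tr_m^{3m}(x))\bigr)^{2^l}$ inside the outer $(\cdot)^{q^3/2^l}$; since $g^{-1}(\Tr_m^{3m}(x))\in\gf_q$ is fixed by $(\cdot)^{q^3}$, this produces exactly the trailing term $g^{-1}(\Tr_m^{3m}(x))$ (e.g.\ $+(\Tr_m^{3m}(x)+D)^{q/4}$ in the first case) present in each displayed formula for $f^{-1}$, which your approach would omit.
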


\begin{proof}
	
	Since \begin{align*}
		\Tr_m^{3m}(x)\circ \left(\Tr_{m}^{3m}(x)^{2^l}+x^{2^l}\right)=&\,\sum^{2}_{i=0}(\Tr_m^{3m}(x)^{q^i})^{2^l}+\Tr_m^{3m}(x)^{2^l}\\
		=&\,\sum^{2}_{i=0}\Tr_m^{3m}(x)^{2^l}+\Tr_m^{3m}(x)^{2^l}=0,
	\end{align*}
	it follows from Lemma \ref{th1} that if $f(x)$ permutes $\gf_{q^3}$, then $g(x)=\sum_{j=0}^{2}(x+\delta)^{q^j(q^2+q+2)}$
	permutes $\gf_{q}.$
	Furthermore, since $q^2+q+2=(q^2+q+1)+1$, we have 
	\begin{align}\label{1eq2^l}
		g(x)=&\,\sum_{j=0}^{2}(x+\delta)^{q^j\left((q^2+q+1)+1
			\right)}\nonumber\\
		=&\,(x+\delta)^{q^2+q+1}(x+\Tr_m^{3m}(\delta))\nonumber\\
		=&\,(x^3+\Tr_m^{3m}(\delta)x^2+\Tr_m^{3m}(\delta^{1+q})x+\delta^{q^2+q+1})(x+\Tr_m^{3m}(\delta))\nonumber\\
		=&\,x^4+\Tr_m^{3m}(\delta^2+\delta^{1+q})x^2+\Tr_m^{3m}(\delta^{2+q}+\delta^{2+q^2})x+\delta^{q^2+q+1}\Tr_m^{3m}(\delta)\nonumber\\
		=&\, x^4+Bx^2+Ax+D.
	\end{align}
	%where $B=\Tr_m^{3m}(\delta^2+\delta^{1+q}), $ $A=\Tr_m^{3m}(\delta^{2+q}+\delta^{2+q^2}),$ and $D=\delta^{q^2+q+1}\Tr_m^{3m}(\delta).$
By setting  $\phi(x)=x$ and $\bar{\phi}(x)=x^{2^l}$ in Lemma \ref{th1}, we have  
	that 
	\begin{equation}\label{0eq2^l}
		\phi(\Tr_{m}^{3m}(x)^{2^l}+x^{2^l})+\bar{\phi}(\Tr_{m}^{3m}(x))=x^{2^l}\end{equation}
	permutes $\gf_{2^{3m}}.$\\
	If $A=0$ and $B=0,$ then by Eq. \eqref{1eq2^l},  $g(x)=x^4+D=(x+D)\circ x^4,$ and so 
	\begin{equation}\label{2eq2^l}
		g^{-1}(x)=x^{q/4}\circ (x+D)=(x+D)^{q/4}.
	\end{equation}
	It follows from Lemma \ref{th1}, Eqs. \eqref{0eq2^l} and  \eqref{2eq2^l} that the compositional inverse of $f(x)$ over $\gf_{q^3}$ is 
	\begin{align*}
		f^{-1}(x)=&\, x^{q^3/2^l} \circ \left(\left(x+(g^{-1}(\Tr_{m}^{3m}(x))+\delta)^{q^2+q+2}\right)+x^{2^l}\circ g^{-1}(\Tr_{m}^{3m}(x)) \right)\\
		=&\, x^{q^3/2^l}+\left((\Tr_{m}^{3m}(x)+D)^{q/4}+\delta\right)^{(q^2+q+2)q^3/2^l}+(\Tr_{m}^{3m}(x)+D)^{q/4}.
	\end{align*}
	If $B=0$ and $A\neq0 $
	is not a cubic of some element in $\gf_q$, then by Eq. \eqref{1eq2^l}, 
	\begin{align*}
		g(x)=&\, 
		x^4+Ax+D =\left(x+D\right) \circ\left(x^4+Ax\right), 
	\end{align*}
	and so, by Lemma \ref{binomial},
	\begin{align}\label{3eq2^l}
		g^{-1}(x)=&\,\left(\frac{N_{q^m/q^d}(A)}{1+N_{q^m/q^d}(A)}\sum_{i=0}^{m/d-1}A^{-\frac{4^{i+1}-1}{3}}x^{4^i}\right) \circ (x+D)\nonumber\\
		=&\, \frac{N_{q^m/q^d}(A)}{1+N_{q^m/q^d}(A)}\sum_{i=0}^{m/d-1}A^{-\frac{4^{i+1}-1}{3}}(x+D)^{4^i},
	\end{align}
	where $d=\gcd(m, 2).$
	It implies by Lemma \ref{th1}, Eqs. \eqref{0eq2^l} and \eqref{3eq2^l} that the compositional inverse of $f(x)$ over $\gf_{q^3}$ is 
	{\footnotesize
		\begin{align*}
			f^{-1}(x)
			%=&\, x^{q^3/2^l} \circ \left(\left(x-(g^{-1}(\Tr_{m}^{3m}(x))+\delta)^{q^2+q+2}\right)+x^{2^l}\circ g^{-1}(\Tr_{m}^{3m}(x)) \right)\\
			=&\, \left(\delta+\frac{N_{q^m/q^d}(A)}{1+N_{q^m/q^d}(A)}\sum_{i=0}^{m/d-1}A^{-\frac{4^{i+1}-1}{3}}(\Tr_{m}^{3m}(x)+D)^{4^i}\right)^{(q^2+q+2)q^3/2^l}\\
			&\,+x^{q^3/2^l}+\frac{N_{q^m/q^d}(A)}{1+N_{q^m/q^d}(A)}\sum_{i=0}^{m/d-1}A^{-\frac{4^{i+1}-1}{3}}(\Tr_{m}^{3m}(x)+D)^{4^i}.
		\end{align*}
	}
	If $AB\neq0,$
	then by Eq. \eqref{1eq2^l}, 
	\begin{align*}
		g(x)=&\, 
		\left(x+D\right) \circ\left(x^4+Bx^2+Ax\right), 
	\end{align*}
	and so, by Lemma \ref{binomial},
	\begin{align}\label{4eq2^l}
		g^{-1}(x)=&\,\sum_{i=0}^{m-1}(S_{m-2-i}^{2^{i+1}}+A^{1-2^{i+1}}S_i)x^{2^i}\circ (x+D)\nonumber\\
		=&\,\sum_{i=0}^{m-1}(S_{m-2-i}^{2^{i+1}}+A^{1-2^{i+1}}S_i)(x+D)^{2^i}, 
	\end{align}
	where $S_i$ is a sequence with $S_{-1}=0, S_0=1, S_i=B^{2^{i-1}}S_{i-1}+A^{2^{i-1}}S_{i-2}.$\\
	It implies by Lemma \ref{th1}, Eqs. \eqref{0eq2^l} and \eqref{4eq2^l} that the compositional inverse of $f(x)$ over $\gf_{q^3}$ is 
	{\footnotesize
		\begin{align*}
			f^{-1}(x)
			%=&\, x^{q^3/2^l} \circ \left(\left(x-(g^{-1}(\Tr_{m}^{3m}(x))+\delta)^{q^2+q+2}\right)+x^{2^l}\circ g^{-1}(\Tr_{m}^{3m}(x)) \right)\\
			=&\, \left(\delta+\sum_{i=0}^{m-1}(S_{m-2-i}^{2^{i+1}}+A^{1-2^{i+1}}S_i)(\Tr_m^{nm}(x)+D)^{2^i}\right)^{(q^2+q+2)q^3/2^l}\\
			&\,+x^{q^3/2^l}+\sum_{i=0}^{m-1}(S_{m-2-i}^{2^{i+1}}+A^{1-2^{i+1}}S_i)(\Tr_m^{nm}(x)+D)^{2^i}.
		\end{align*}
	} This completes the proof. 
\end{proof}

In \cite{wu2022further}, the permutation properties of $(\Tr_m^{3m}(x)+\delta)^{q^2+q+2}+x$ over $\gf_{q^3}$ were investigated, where  $q=2^m$ with $m$ being  a positive integer and   $\delta \in \gf_{q^3}$. We will focus on  determining the compositional inverse of permutation polynomial in this specific form in the following theorem. 

\begin{theorem}\label{th2^0}
	For a positive integer $m$, let $q=2^m,$  $\delta \in \gf_{q^3},$ $A=\Tr_m^{3m}(\delta^{2+q}+\delta^{2+q^2}+1),$   $B=\Tr_m^{3m}(\delta^2+\delta^{1+q}), $ and  $D=\delta^{q^2+q+1}\Tr_m^{3m}(\delta).$  Assume that  $$f(x)=(\Tr_m^{3m}(x)+\delta)^{q^2+q+2}+x$$ permutes $ \gf_{q^3} .$ \\
	If $A=0$ and $B=0,$ then the compositional inverse of $f(x)$ over $\gf_{q^3}$ is 	\begin{align*}
		f^{-1}(x)=&\, x+\left((\Tr_m^{nm}(x)+D)^{q/4}+\delta\right)^{q^2+q+2} .
	\end{align*}
	If $B=0$ and $A\neq0 $
	is not a cubic of some element in $\gf_q$, then the compositional inverse of $f(x)$ over $\gf_{q^3}$ is 	\begin{align*}
		f^{-1}(x)	
		=&\,x+ \left(\delta+\frac{N_{q^m/q^d}(A)}{1+N_{q^m/q^d}(A)}\sum_{i=0}^{m/d-1}A^{-\frac{4^{i+1}-1}{3}}(\Tr_m^{nm}(x)+D)^{4^i}\right)^{q^2+q+2},
	\end{align*}		
	where  $d=\gcd(m, 2).$\\
	If $AB\neq0,$  then the compositional inverse of $f(x)$ over $\gf_{q^3}$ is 	\begin{align*}			
		f^{-1}(x)		
		=&\,x+ \left(\delta+\sum_{i=0}^{m-1}(S_{m-2-i}^{2^{i+1}}+A^{1-2^{i+1}}S_i)(\Tr_m^{nm}(x)+D)^{2^i}\right)^{q^2+q+2},		
	\end{align*}
	where $S_i$ is a sequence with $S_{-1}=0, S_0=1, S_i=B^{2^{i-1}}S_{i-1}+A^{2^{i-1}}S_{i-2}.$
\end{theorem}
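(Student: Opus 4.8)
The plan is to follow exactly the template of the preceding theorem, specializing $f_1(x)=x$ instead of $f_1(x)=\Tr_m^{3m}(x)^{2^l}+x^{2^l}$, and invoking Lemma~\ref{th1} with the trivial choice $\phi(x)=x$, $\bar\phi(x)=0$. First I would verify hypothesis (i) of Lemma~\ref{th1}: since $f_1(x)=x$, we have $\Tr_m^{3m}(x)\circ f_1(x)=\Tr_m^{3m}(x)=\varphi(x)\circ\Tr_m^{3m}(x)$ with $\varphi(x)=x$, and hypothesis (ii) is immediate because $f_1(x)=x$ is injective everywhere, in particular on each fiber $\Tr_m^{3m}(a)^{-1}$. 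Then Lemma~\ref{th1} applies with $k=1$, $b_1=1$, $s_1=q^2+q+2$, $t_1=1$, and tells us that the associated $g(x)$ over $\gf_q$ is $g(x)=\sum_{j=0}^{2}(x+\delta)^{q^j(q^2+q+2)}+\varphi(x)$; here, unlike in the previous theorem where $\varphi=0$, the extra summand $\varphi(x)=x$ is present, which is precisely why the constant in $A$ changes from $\Tr_m^{3m}(\delta^{2+q}+\delta^{2+q^2})$ to $\Tr_m^{3m}(\delta^{2+q}+\delta^{2+q^2}+1)$.

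Next I would carry out the same computation as in Eq.~\eqref{1eq2^l}: writing $q^2+q+2=(q^2+q+1)+1$ and using $(x+\delta)^{q^2+q+1}=x^3+\Tr_m^{3m}(\delta)x^2+\Tr_m^{3m}(\delta^{1+q})x+\delta^{q^2+q+1}$ (the norm being in $\gf_q$), one multiplies by $(x+\Tr_m^{3m}(\delta))$ to obtain $\sum_{j=0}^{2}(x+\delta)^{q^j(q^2+q+2)}=x^4+\Tr_m^{3m}(\delta^2+\delta^{1+q})x^2+\Tr_m^{3m}(\delta^{2+q}+\delta^{2+q^2})x+\delta^{q^2+q+1}\Tr_m^{3m}(\delta)$. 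Adding $\varphi(x)=x$ absorbs into the linear coefficient, giving $g(x)=x^4+Bx^2+Ax+D$ with exactly the $A,B,D$ of the statement. From here the three cases are handled identically to the previous theorem: if $A=B=0$ then $g(x)=(x+D)\circ x^4$ so $g^{-1}(x)=(x+D)^{q/4}$; if $B=0$, $A\neq0$ not a cube then $g(x)=(x+D)\circ(x^4+Ax)$ and Lemma~\ref{binomial} (with $q$ replaced by $4$, $r=1$, $d=\gcd(m,2)$) gives $g^{-1}$; if $AB\neq0$ then $g(x)=(x+D)\circ(x^4+Bx^2+Ax)$ and Lemma~\ref{421} gives $g^{-1}$ via the sequence $S_i$.

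Finally I would feed these into the compositional-inverse formula of Lemma~\ref{th1}. With $\phi(x)=x$, $\bar\phi(x)=0$, we have $\phi(f_1(x))+\bar\phi(\Tr_m^{3m}(x))=x$, which trivially permutes $\gf_{q^3}$ and has inverse $x$; also $h(x)=(x+\delta)^{q^2+q+2}$. The formula collapses to $f^{-1}(x)=x-h(g^{-1}(\Tr_m^{3m}(x)))=x+\bigl(g^{-1}(\Tr_m^{3m}(x))+\delta\bigr)^{q^2+q+2}$ (characteristic $2$, so $-$ is $+$), and substituting the three expressions for $g^{-1}$ yields exactly the three displayed formulas. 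The only mild subtlety, and the one point I would double-check, is the bookkeeping in the second case: one must confirm that ``$A$ not a cube in $\gf_q$'' is the correct translation of the norm condition $N_{q^m/q^d}(A)\neq1$ from Lemma~\ref{binomial} applied to $L(x)=x^4+Ax=x^{4}-(-A)x$ over $\gf_{2^m}$ with base field $\gf_{2^d}$, $d=\gcd(m,2)$ — but this is the same verification already made in the preceding theorem, so no new obstacle arises. In short, the proof is a direct specialization; the ``hard part,'' such as it is, was already done in establishing Lemma~\ref{th1} and the previous theorem.
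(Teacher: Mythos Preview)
Your proposal is correct and follows essentially the same approach as the paper's own proof: specialize Lemma~\ref{th1} with $f_1(x)=x$, $\varphi(x)=x$, $\phi(x)=x$, $\bar\phi(x)=0$, compute $g(x)=x^4+Bx^2+Ax+D$ via the factorization $q^2+q+2=(q^2+q+1)+1$, and then invert $g$ in the three cases using the obvious decomposition, Lemma~\ref{binomial}, and Lemma~\ref{421} respectively. Your explanation of why the extra $\varphi(x)=x$ shifts the linear coefficient from $\Tr_m^{3m}(\delta^{2+q}+\delta^{2+q^2})$ to $\Tr_m^{3m}(\delta^{2+q}+\delta^{2+q^2}+1)$ is exactly the point, and the rest is indeed a direct specialization of the preceding theorem.
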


\begin{proof}
	Since $
	\Tr_m^{3m}(x)\circ x= x \circ \Tr_m^{3m}(x), $ 
	it follows from Lemma \ref{th1} that if $f(x)$ permutes $\gf_{q^3}$, then $g(x)=\sum_{j=0}^{2}(x+\delta)^{q^j(q^2+q+2)}+x$
	permutes $\gf_{q}.$
	Furthermore, since $q^2+q+2=(q^2+q+1)+1$, we have 
	\begin{align}\label{1eq2^0}
		g(x)=&\,\sum_{j=0}^{2}(x+\delta)^{q^j\left((q^2+q+1)+1
			\right)}+x\nonumber\\
		=&\,(x+\delta)^{q^2+q+1}(x+\Tr_m^{3m}(\delta))+x\nonumber\\
		=&\,(x^3+\Tr_m^{3m}(\delta)x^2+\Tr_m^{3m}(\delta^{1+q})x+\delta^{q^2+q+1})(x+\Tr_m^{3m}(\delta))+x\nonumber\\
		=&\,x^4+\Tr_m^{3m}(\delta^2+\delta^{1+q})x^2+\Tr_m^{3m}(\delta^{2+q}+\delta^{2+q^2}+1)x+\delta^{q^2+q+1}\Tr_m^{3m}(\delta)\nonumber\\
		=&\, x^4+Bx^2+Ax+D.
	\end{align}

	By setting  $\phi(x)=x$ and $\bar{\phi}(x)=0$ in Lemma \ref{th1}, we have 
	that 
	\begin{equation}\label{0eq2^0}
		\phi(x)+\bar{\phi}(\Tr_{m}^{3m}(x))=x\end{equation}
	permutes $\gf_{q^3}.$\\
	If $A=0$ and $B=0,$ then by Eq. \eqref{1eq2^0},  $g(x)=x^4+D=(x+D)\circ x^4,$ and so 
	\begin{equation}\label{2eq2^0}
		g^{-1}(x)=x^{q/4}\circ (x+D)=(x+D)^{q/4}.
	\end{equation}
	It follows from Lemma \ref{th1}, Eqs. \eqref{0eq2^0} and  \eqref{2eq2^0} that the compositional inverse of $f(x)$ over $\gf_{q^3}$ is 
	\begin{align*}
		f^{-1}(x)
		%=&\, x+\left(g^{-1}(\Tr_m^{nm}(x))+\delta\right)^{q^2+q+2}\\
		=&\, x+\left((\Tr_m^{nm}(x)+D)^{q/4}+\delta\right)^{q^2+q+2} .
	\end{align*}
	If $B=0$ and $A\neq0 $
	is not a cubic of some element in $\gf_q$, then by Eq. \eqref{1eq2^0}, 
	\begin{align*}
		g(x)=&\, 
		x^4+Ax+D =\left(x+D\right) \circ\left(x^4+Ax\right), 
	\end{align*}
	and so, by Lemma \ref{binomial},
	\begin{align}\label{3eq2^0}
		g^{-1}(x)=&\,\left(\frac{N_{q^m/q^d}(A)}{1+N_{q^m/q^d}(A)}\sum_{i=0}^{m/d-1}A^{-\frac{4^{i+1}-1}{3}}x^{4^i}\right) \circ (x+D)\nonumber\\
		=&\, \frac{N_{q^m/q^d}(A)}{1+N_{q^m/q^d}(A)}\sum_{i=0}^{m/d-1}A^{-\frac{4^{i+1}-1}{3}}(x+D)^{4^i},
	\end{align}
	where $d=\gcd(m, 2).$
	It implies by Lemma \ref{th1}, Eqs. \eqref{0eq2^0} and \eqref{3eq2^0} that the compositional inverse of $f(x)$ over $\gf_{q^3}$ is 
	\begin{align*}
		f^{-1}(x)
		%=&\, x+\left(g^{-1}(\Tr_m^{nm}(x))+\delta\right)^{q^2+q+2}\\		
		=&\,x+ \left(\delta+\frac{N_{q^m/q^d}(A)}{1+N_{q^m/q^d}(A)}\sum_{i=0}^{m/d-1}A^{-\frac{4^{i+1}-1}{3}}(\Tr_m^{nm}(x)+D)^{4^i}\right)^{q^2+q+2} .
	\end{align*}	
	If $AB\neq0,$
	then by Eq. \eqref{1eq2^l}, 
	\begin{align*}
		g(x)=&\, 
		\left(x+D\right) \circ\left(x^4+Bx^2+Ax\right), 
	\end{align*}
	and so, by Lemma \ref{binomial},
	\begin{align}\label{4eq2^0}
		g^{-1}(x)=&\,\sum_{i=0}^{m-1}(S_{m-2-i}^{2^{i+1}}+A^{1-2^{i+1}}S_i)x^{2^i} \circ (x+D)\nonumber\\
		=&\,\sum_{i=0}^{m-1}(S_{m-2-i}^{2^{i+1}}+A^{1-2^{i+1}}S_i)(x+D)^{2^i}, 
	\end{align}
	where $S_i$ is a sequence with $S_{-1}=0, S_0=1, S_i=B^{2^{i-1}}S_{i-1}+A^{2^{i-1}}S_{i-2}.$\\
	It implies by Lemma \ref{th1}, Eqs. \eqref{0eq2^0} and \eqref{4eq2^0} that the compositional inverse of $f(x)$ over $\gf_{q^3}$ is 
	\begin{align*}			
		f^{-1}(x)
		%=&\, x+\left(g^{-1}(\Tr_m^{nm}(x))+\delta\right)^{q^2+q+2}\\	
		=&\,x+ \left(\delta+\sum_{i=0}^{m-1}(S_{m-2-i}^{2^{i+1}}+A^{1-2^{i+1}}S_i)(\Tr_m^{nm}(x)+D)^{2^i}\right)^{q^2+q+2}.		
	\end{align*}
	We are done. 
\end{proof}

In their study highlighted by \cite{wu2022further}, analysis was conducted on the  permutation properties exhibited by the polynomial  $(\Tr_m^{3m}(x)+\delta)^{q^2+q+2}+x^2$ over $\gf_{q^3}$. Here  $q=2^m$ with $m$ as a positive integer  and  $\delta \in \gf_{q^3}$.  Subsequently, the forthcoming theorem will focus on  determining the compositional inverse of permutation polynomial in this specific form.  As the proof closely resembles that of  Theorem \ref{th2^0}, the primary distinction lies in the function   $	g(x)=\,\sum_{j=0}^{2}(x+\delta)^{q^j\left((q^2+q+1)+1
	\right)}+x^2.$  Consequently, we omit the specific details. 

\begin{theorem}\label{th2^1}
	For a positive integer $m$, let $q=2^m,$  $\delta \in \gf_{q^3},$ $A=\Tr_m^{3m}(\delta^{2+q}+\delta^{2+q^2}),$  $B=\Tr_m^{3m}(\delta^2+\delta^{1+q}+1)$  and $D=\delta^{q^2+q+1}\Tr_m^{3m}(\delta).$ Assume that  $$f(x)=(\Tr_m^{3m}(x)+\delta)^{q^2+q+2}+x^2$$ permutes $ \gf_{q^3} .$ \\
	If $A=0$ and $B=0,$ then the compositional inverse of $f(x)$ over $\gf_{q^3}$ is 	\begin{align*}
		f^{-1}(x)=&\, x+\left((\Tr_m^{nm}(x)+D)^{q/4}+\delta\right)^{q^2+q+2} .
	\end{align*}
	If $B=0$ and $A\neq0 $
	is not a cubic of some element in $\gf_q$, then the compositional inverse of $f(x)$ over $\gf_{q^3}$ is 	\begin{align*}
		f^{-1}(x)	
		=&\,x+ \left(\delta+\frac{N_{q^m/q^d}(A)}{1+N_{q^m/q^d}(A)}\sum_{i=0}^{m/d-1}A^{-\frac{4^{i+1}-1}{3}}(\Tr_m^{nm}(x)+D)^{4^i}\right)^{q^2+q+2},
	\end{align*}		
	where  $d=\gcd(m, 2).$\\
	If $AB\neq0,$  then the compositional inverse of $f(x)$ over $\gf_{q^3}$ is 	\begin{align*}			
		f^{-1}(x)		
		=&\,x+ \left(\delta+\sum_{i=0}^{m-1}(S_{m-2-i}^{2^{i+1}}+A^{1-2^{i+1}}S_i)(\Tr_m^{nm}(x)+D)^{2^i}\right)^{q^2+q+2},		
	\end{align*}
	where $S_i$ is a sequence with $S_{-1}=0, S_0=1, S_i=B^{2^{i-1}}S_{i-1}+A^{2^{i-1}}S_{i-2}.$
\end{theorem}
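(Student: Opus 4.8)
The plan is to run the proof of Theorem~\ref{th2^0} almost verbatim, the one real change being that here $f_1(x)=x^2$ is not the identity map, so the inverse formula of Lemma~\ref{th1} must be applied with a suitably chosen pair $\phi,\bar{\phi}$ and an extra Frobenius power has to be carried along.

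First I would apply Lemma~\ref{th1} with $f_1(x)=x^2$. Working in characteristic $2$ we have $\Tr_m^{3m}(x^2)=\Tr_m^{3m}(x)^2$, so hypothesis~(i) holds with $\varphi(x)=x^2$, and since $x\mapsto x^2$ permutes $\gf_{q^3}$ it is injective on every fibre $\Tr_m^{3m}(a)^{-1}$, so hypothesis~(ii) holds. Hence $f(x)$ permutes $\gf_{q^3}$ if and only if
\[
g(x)=\sum_{j=0}^{2}(x+\delta)^{q^j(q^2+q+2)}+x^2
\]
permutes $\gf_q$. Writing $q^2+q+2=(q^2+q+1)+1$, using that $(x+\delta)^{q^2+q+1}\in\gf_q$ (so it is fixed by $y\mapsto y^q$) and that $\sum_{j=0}^{2}(x+\delta)^{q^j}=x+\Tr_m^{3m}(\delta)$ for $x\in\gf_q$, and expanding
\[
(x+\delta)^{q^2+q+1}=x^3+\Tr_m^{3m}(\delta)\,x^2+\Tr_m^{3m}(\delta^{1+q})\,x+\delta^{q^2+q+1},
\]
one multiplies through by $x+\Tr_m^{3m}(\delta)$, adds $x^2$, and simplifies using $\Tr_m^{3m}(\delta)^2=\Tr_m^{3m}(\delta^2)$ and $\Tr_m^{3m}(1)=1$ to bring $g$ to the form $x^4+Bx^2+Ax+D$ with exactly the $A,B,D$ of the statement. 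The identity that deserves the most care here is $\Tr_m^{3m}(\delta^{1+q})\Tr_m^{3m}(\delta)+\delta^{q^2+q+1}=\Tr_m^{3m}(\delta^{2+q}+\delta^{2+q^2})$, which one checks by writing both sides in terms of $\delta,\delta^q,\delta^{q^2}$ and cancelling in characteristic $2$.

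Finally I would invoke the inverse part of Lemma~\ref{th1} with $\phi(x)=x$ and $\bar{\phi}(x)=0$, so that $\phi(f_1(x))+\bar{\phi}(\Tr_m^{3m}(x))=x^2$ permutes $\gf_{q^3}$ with compositional inverse $x^{q^3/2}$, and with $h(x)=(x+\delta)^{q^2+q+2}$; it then only remains to compute $g^{-1}$ in the three regimes. If $A=B=0$ then $g(x)=(x+D)\circ x^4$, so $g^{-1}(x)=(x+D)^{q/4}$. If $B=0$ and $A\neq0$ is not a cube in $\gf_q$ then $g(x)=(x+D)\circ(x^4+Ax)$, and Lemma~\ref{binomial} applied to $x^4+Ax$ over $\gf_{2^m}$ with $d=\gcd(m,2)$ gives $g^{-1}$ in terms of $N_{q^m/q^d}(A)$. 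If $AB\neq0$ then $g(x)=(x+D)\circ(x^4+Bx^2+Ax)$, and Lemma~\ref{421} gives $g^{-1}$ in terms of the sequence $S_{-1}=0,\ S_0=1,\ S_i=B^{2^{i-1}}S_{i-1}+A^{2^{i-1}}S_{i-2}$. Substituting $g^{-1}(\Tr_m^{3m}(x))$ into the inverse formula of Lemma~\ref{th1}, distributing the outer $q^3/2$-th power and using $\bigl(g^{-1}(\Tr_m^{3m}(x))\bigr)^{q^3}=g^{-1}(\Tr_m^{3m}(x))$, then yields $f^{-1}(x)$ in each case. The main obstacle is thus not conceptual but a matter of bookkeeping: propagating the trace identities faithfully through the reduction of $g$, and carrying the $q^3/2$-th power coming from the inversion of $f_1(x)=x^2$ --- this last point being precisely where the present theorem differs arithmetically from Theorem~\ref{th2^0}, whose $f_1$ is the identity.
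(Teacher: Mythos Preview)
Your approach is exactly the paper's: the authors omit the proof and simply say it is the same as Theorem~\ref{th2^0} with $g(x)=\sum_{j=0}^{2}(x+\delta)^{q^{j}(q^{2}+q+2)}+x^{2}$ in place of the earlier $g$. Your reduction of $g$ to $x^{4}+Bx^{2}+Ax+D$ and your case split via Lemmas~\ref{binomial} and~\ref{421} are precisely what the omitted argument does.

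There is, however, a genuine gap in your final step. With $\phi(x)=x$, $\bar{\phi}=0$ and $f_{1}(x)=x^{2}$, Lemma~\ref{th1} gives
\[
f^{-1}(x)=\bigl(x+h(g^{-1}(\Tr_m^{3m}(x)))\bigr)^{q^{3}/2}
=x^{q^{3}/2}+\bigl(g^{-1}(\Tr_m^{3m}(x))+\delta\bigr)^{(q^{2}+q+2)\,q^{3}/2},
\]
and this outer $q^{3}/2$-th power does \emph{not} disappear: the identity $(g^{-1}(\Tr_m^{3m}(x)))^{q^{3}}=g^{-1}(\Tr_m^{3m}(x))$ that you invoke is true but irrelevant, since it does not turn $x^{q^{3}/2}$ into $x$. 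In fact the formula displayed in the theorem cannot be the inverse: if one sets $F(x)=x+(g^{-1}(\Tr_m^{3m}(x))+\delta)^{q^{2}+q+2}$ then, using $\Tr_m^{3m}\circ f=g\circ\Tr_m^{3m}$, one computes
\[
F(f(x))=f(x)+(\Tr_m^{3m}(x)+\delta)^{q^{2}+q+2}=x^{2},
\]
not $x$. So your derivation is correct, but it proves the formula with the extra $q^{3}/2$-th power, not the one printed in the statement; the displayed formulas appear to have been carried over verbatim from Theorem~\ref{th2^0} (where $f_{1}(x)=x$) and are missing the Frobenius factor coming from inverting $f_{1}(x)=x^{2}$. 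You should state this explicitly rather than asserting that your computation ``yields $f^{-1}(x)$ in each case.''
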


The study conducted in \cite{wu2022further} explored  the permutation properties exhibited by $(\Tr_m^{3m}(x)+\delta)^{q^2+q+2}+x^4$ over $\gf_{q^3}$, where $q=2^m$ with a positive integer $m$ and $\delta \in \gf_{q^3} $.  We will determine the compositional inverse of permutation polynomial in this specific form in the  following theorem. 

\begin{theorem}
	Let $q=2^m$ with a positive integer $m.$  For any  $\delta \in \gf_{q^3}$, assume that 
	$$f(x)=(\Tr_m^{3m}(x)+\delta)^{q^2+q+2}+x^4$$
	permutes $\gf_{q^3}.$ \\
	If  $\Tr_m^{3m}(\delta^2+\delta^{1+q})=0$ and $\Tr_m^{3m}(\delta^2(\delta^{q}+\delta^{q^2}))\neq0, $ then the compositional inverse of $f(x)$ over $\gf_{q^3}$ is 
	$$f^{-1}(x)=x+\left(\delta+\Tr_m^{3m}(\delta^{2+q}+\delta^{2+q^2})^{-1}\left(\Tr_m^{nm}(x)+\delta^{1+q+q^2}\Tr_m^{3m}(\delta)\right)\right)^{q^2+q+2}.$$	
	If $\Tr_m^{3m}(\delta^2+\delta^{1+q})\neq0$ and $\Tr_m^{3m}(\delta^2(\delta^{q}+\delta^{q^2}))=0, $  then the compositional inverse of $f(x)$ over $\gf_{q^3}$ is 
	$$f^{-1}(x)=x+\left(\delta+\left(\Tr_m^{3m}(\delta^2+\delta^{1+q})^{-1}(\Tr_m^{nm}(x)+\delta^{1+q+q^2}\Tr_m^{3m}(\delta))\right)^{q/2}\right)^{q^2+q+2}.$$
\end{theorem}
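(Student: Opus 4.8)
The plan is to run the scheme used in the proofs of Theorems~\ref{th2^0} and~\ref{th2^1}, now with $f_1(x)=x^4$, exploiting that the fourth power makes the companion polynomial over $\gf_q$ collapse to degree at most $2$, so that — unlike in those theorems — no linearised inversion via Lemma~\ref{421} or Lemma~\ref{binomial} is needed. First I would verify the hypotheses of Lemma~\ref{th1} with $k=1$, $b_1=1$, $t_1=1$, $s_1=q^2+q+2$, $n=3$ and $f_1(x)=x^4$: raising to the fourth power commutes with the $q$-Frobenius and fixes $\gf_q$ pointwise, so $\Tr_m^{3m}(x^4)=\Tr_m^{3m}(x)^4$ and condition~(i) holds with $\varphi(x)=x^4$; and $\gcd(4,q^3-1)=1$ makes $x\mapsto x^4$ a bijection of $\gf_{q^3}$, hence injective on every fibre $\Tr_m^{3m}(a)^{-1}$, so condition~(ii) holds. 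Lemma~\ref{th1} then gives that $f$ permutes $\gf_{q^3}$ if and only if $g(x)=\sum_{j=0}^{2}(x+\delta)^{q^j(q^2+q+2)}+x^4$ permutes $\gf_q$.

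Next I would compute $g$ in closed form. Writing $q^2+q+2=(q^2+q+1)+1$ and using that for $x\in\gf_q$ the element $(x+\delta)^{q^2+q+1}=(x+\delta)(x+\delta^q)(x+\delta^{q^2})$ lies in $\gf_q$, hence is fixed by every power of the $q$-Frobenius, one obtains $\sum_{j=0}^{2}(x+\delta)^{q^j(q^2+q+2)}=(x+\delta)^{q^2+q+1}(x+\Tr_m^{3m}(\delta))$. Expanding $(x+\delta)^{q^2+q+1}=x^3+\Tr_m^{3m}(\delta)x^2+\Tr_m^{3m}(\delta^{1+q})x+\delta^{1+q+q^2}$ and multiplying out, the $x^3$ term vanishes in characteristic~$2$, and the extra $x^4$ in $g$ cancels the $x^4$ coming from the product, so $g(x)=Bx^2+Ax+D$ with $B=\Tr_m^{3m}(\delta^2+\delta^{1+q})$, $A=\Tr_m^{3m}(\delta)\Tr_m^{3m}(\delta^{1+q})+\delta^{1+q+q^2}$ and $D=\delta^{1+q+q^2}\Tr_m^{3m}(\delta)$. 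A short trace computation — expand the product of the two traces into nine monomials, observe that $\delta^{1+q+q^2}$ occurs three times and so survives once in characteristic~$2$, then cancels the trailing $+\delta^{1+q+q^2}$ — identifies $A=\Tr_m^{3m}(\delta^{2+q}+\delta^{2+q^2})=\Tr_m^{3m}(\delta^2(\delta^q+\delta^{q^2}))$.

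Finally I would carry out the case split and reconstruct $f^{-1}$. Since $Bx^2+Ax=x(Bx+A)$ has the nonzero root $A/B$ whenever $AB\neq 0$, and $g$ reduces to the constant $D$ when $A=B=0$, the hypothesis that $f$ — equivalently $g$ — permutes forces exactly one of the two alternatives in the statement. If $B=0$ and $A\neq 0$, then $g(x)=Ax+D=(x+D)\circ(Ax)$, so $g^{-1}(x)=A^{-1}(x+D)$; if $B\neq 0$ and $A=0$, then $g(x)=Bx^2+D=(x+D)\circ(Bx^2)$, so $g^{-1}(x)=(B^{-1}(x+D))^{q/2}$. To recover $f^{-1}$ I would apply the inversion part of Lemma~\ref{th1} with $\phi(x)=x$ and $\bar{\phi}(x)=0$: then $\phi(f_1(x))+\bar{\phi}(\Tr_m^{3m}(x))=x^4$ permutes $\gf_{q^3}$ with inverse $x^{q^3/4}$, $h(x)=(x+\delta)^{q^2+q+2}$, and Lemma~\ref{th1} yields $f^{-1}(x)=(x-h(g^{-1}(\Tr_m^{3m}(x))))^{q^3/4}$; expanding in characteristic~$2$ as $x^{q^3/4}+(g^{-1}(\Tr_m^{3m}(x))+\delta)^{(q^2+q+2)q^3/4}$ and substituting the two expressions for $g^{-1}(\Tr_m^{3m}(x))$ yields $f^{-1}$ in the asserted shape. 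The computation is elementary throughout; the only points demanding care are the trace identity pinning down $A$ and keeping track of the outer $q^3/4$-th power produced by inverting $x\mapsto x^4$, which is the one genuine difference from the $f_1(x)=x$ instance treated in Theorem~\ref{th2^0}.
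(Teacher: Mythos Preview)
Your approach is exactly the paper's: apply Lemma~\ref{th1} with $f_1(x)=x^4$, expand $(x+\delta)^{q^2+q+1}(x+\Tr_m^{3m}(\delta))+x^4$ to see that the quartic term cancels so that $g(x)=Bx^2+Ax+D$, invert $g$ in the two degenerate cases, and plug back via Lemma~\ref{th1} with $\phi(x)=x$, $\bar\phi(x)=0$. Your computation of $A$ via the nine-monomial expansion is a nice explicit justification of an identity the paper simply asserts.

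There is, however, a discrepancy you should not gloss over. With $\phi(x)=x$ and $\bar\phi=0$ one has $\phi(f_1(x))+\bar\phi(\Tr_m^{3m}(x))=x^4$, whose inverse on $\gf_{q^3}$ is $x^{q^3/4}$; this is exactly what you write, and it is correct. The paper, at the corresponding step, writes $\phi(x)+\bar\phi(\Tr_m^{3m}(x))=x$ --- evidently copied from the proof of Theorem~\ref{th2^0} where $f_1(x)=x$ --- and consequently the displayed formulas in the theorem statement omit the outer $q^3/4$-th power. Compare with the earlier theorem for $f_1(x)=x^{2^l}$, where the paper \emph{does} retain the factor $q^3/2^l$ in every exponent. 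Your derivation therefore produces
\[
f^{-1}(x)=x^{q^3/4}+\bigl(g^{-1}(\Tr_m^{3m}(x))+\delta\bigr)^{(q^2+q+2)\,q^3/4},
\]
which is correct but is \emph{not} ``the asserted shape'': the stated formulas in the theorem are off by this missing $q^3/4$-th power. So your plan is sound and your execution is actually more careful than the paper's here; just do not claim at the end that it reproduces the printed formula verbatim.
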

\begin{proof}
	Since $\Tr_m^{3m}(x)$ and $x^4$ are linearized polynomials, we have 
	$\Tr_m^{3m}(x)\circ x^4=x^4 \circ\Tr_m^{3m}(x).$
	It follows from Lemma \ref{th1} that if $f(x)$ permutes $\gf_{q^3},$ then 
	$$g(x)=(x+\delta)^{\frac{q^3-1}{q-1}}(x+\Tr_m^{3m}(\delta))+x^4$$
	permutes $\gf_{q}.$
	Moreover,
	\begin{align}\label{1eq2^2}
		g(x)=&\,(x+\delta)^{1+q+q^2}(x+\Tr_m^{3m}(\delta))+x^4\nonumber\\
		=&\,(x^3+\Tr_m^{3m}(\delta)x^2+\Tr_m^{3m}(\delta^{1+q})x+\delta^{1+q+q^2})(x+\Tr_m^{3m}(\delta))+x^4\nonumber\\
		=&\,\Tr_m^{3m}(\delta^2+\delta^{1+q})x^2+\Tr_m^{3m}(\delta^{2+q}+\delta^{2+q^2})x+\delta^{1+q+q^2}\Tr_m^{3m}(\delta).
	\end{align}
	Taking $\phi(x)=x$ and $\bar{\phi}(x)=0$ in Lemma \ref{th1}, we have 
	that 
	\begin{equation}\label{0eq2^2}
		\phi(x)+\bar{\phi}(\Tr_{m}^{3m}(x))=x\end{equation}
	permutes $\gf_{2^{3m}}.$\\
	If  $\Tr_m^{3m}(\delta^2+\delta^{1+q})=0$ and $\Tr_m^{3m}(\delta^2(\delta^{q}+\delta^{q^2}))\neq0, $ then by Eq. \eqref{1eq2^2}, $g(x)=\Tr_m^{3m}(\delta^{2+q}+\delta^{2+q^2})x+\delta^{1+q+q^2}\Tr_m^{3m}(\delta),$ and so 
	\begin{equation}\label{2eq2^2}
		g^{-1}(x)=\Tr_m^{3m}(\delta^{2+q}+\delta^{2+q^2})^{-1}\left(x+\delta^{1+q+q^2}\Tr_m^{3m}(\delta)\right).
	\end{equation}
	Therefore, it implies by Lemma \ref{th1},  Eqs. \eqref{2eq2^2} and \eqref{0eq2^2}, that the compositional inverse of $f(x)$ over $\gf_{q^3}$ is  $$f^{-1}(x)=x+\left(\delta+\Tr_m^{3m}(\delta^{2+q}+\delta^{2+q^2})^{-1}\left(\Tr_m^{nm}(x)+\delta^{1+q+q^2}\Tr_m^{3m}(\delta)\right)\right)^{q^2+q+2}.$$
	If $\Tr_m^{3m}(\delta^2+\delta^{1+q})\neq0$ and $\Tr_m^{3m}(\delta^2(\delta^{q}+\delta^{q^2}))=0, $  then then by Eq. \eqref{1eq2^2}, 
	\begin{align*}
		g(x)=&\,	\Tr_m^{3m}(\delta^2+\delta^{1+q})x^2+\delta^{1+q+q^2}\Tr_m^{3m}(\delta)
		\\=&\,\left(\Tr_m^{3m}(\delta^2+\delta^{1+q})x+\delta^{1+q+q^2}\Tr_m^{3m}(\delta)\right)\circ x^2,
	\end{align*}
	and so,
	\begin{align}\label{3eq2^2}
		g^{-1}(x)=&\, x^{q/2}\circ \left(\Tr_m^{3m}(\delta^2+\delta^{1+q})^{-1}(x+\delta^{1+q+q^2}\Tr_m^{3m}(\delta))\right)\nonumber\\
		=&\, \left(\Tr_m^{3m}(\delta^2+\delta^{1+q})^{-1}(x+\delta^{1+q+q^2}\Tr_m^{3m}(\delta))\right)^{q/2}.	
	\end{align}
	Consequently, it follows from Lemma \ref{th1},  Eqs. \eqref{0eq2^2} and  \eqref{3eq2^2}, that the compositional inverse of $f(x)$ over $\gf_{q^3}$ is 
	$$f^{-1}(x)=x+\left(\delta+\left(\Tr_m^{3m}(\delta^2+\delta^{1+q})^{-1}(\Tr_m^{nm}(x)+\delta^{1+q+q^2}\Tr_m^{3m}(\delta))\right)^{q/2}\right)^{q^2+q+2}.$$
	We complete the proof.
\end{proof}

\section{The compositional inverses of the permutation polynomials of the form $\sum_{i=1}^kb_i\left(\Tr_m^{2m}(x)^{t_i}+\delta\right)^{s_i}+x$ over $\gf_{2^{2m}}$}
This section analyzes the compositional inverses of permutation polynomials of the form
$$f(x)=\sum_{i=1}^kb_i\left(\Tr_m^{2m}(x)^{t_i}+\delta\right)^{s_i}+x$$ over $\gf_{2^{2m}}$ ,
where for $1 \leq i \leq k,$ , $m, s_i$ are positive integers, $b_i\in \gf_{2^m}$ and $\delta\in \gf_{2^{2m}}.$

In \cite{li2020some}, a proposition was presented regarding the  permutation property of the polynomial
$x+\left(\Tr_m^{2m}(x)^{(2^m+1)/3}+\delta\right)^{2^{m-1}+1}$ over $\gf_{2^{2m}}$, where $\delta \in \gf_{2^{2m}}$ and $m$ is odd.  We aim to  investigate the compositional inverse of this calss of permutation polynomial in the following theorem. 
\begin{theorem}
	Let	 $\delta \in \gf_{2^{2m}}$ and $m$ be odd. Then the compositional inverse of permutation polynomial 
	$$f(x)=x+\left(\Tr_m^{2m}(x)^{(2^m+1)/3}+\delta\right)^{2^{m-1}+1}$$
	over $\gf_{2^{2m}}$ is 	$$f^{-1}(x)=x+\left((\Tr_m^{2m}(x)^2+\delta^3+\delta^{3\cdot 2^m})^{(2^{m+1}-1)/3}+\delta^{2^m}\right)^{2^{m-1}+1}.$$
\end{theorem}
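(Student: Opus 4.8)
The plan is to recognize $f$ as a special case of Lemma~\ref{th1} and then invert the associated polynomial $g$ explicitly. Take $p=2$, $n=2$, $k=1$, $b_1=1$, $t_1=(2^m+1)/3$ (a positive integer, since $m$ odd forces $3\mid 2^m+1$), $s_1=2^{m-1}+1$, $f_1(x)=x$ and $\varphi(x)=x$; conditions (i) and (ii) of Lemma~\ref{th1} are trivial for $f_1(x)=x$. The lemma then reduces the permutation question to that of
$$g(x)=\sum_{j=0}^{1}\bigl(x^{t_1}+\delta\bigr)^{s_1\cdot 2^{mj}}+x=\Tr_m^{2m}\bigl((x^{t_1}+\delta)^{s_1}\bigr)+x$$
on $\gf_{2^m}$ (for $x\in\gf_{2^m}$, $x^{2^m}=x$, so the $j=1$ term equals $(x^{t_1}+\delta^{2^m})^{s_1}$), which permutes $\gf_{2^m}$ by the hypothesis that $f$ permutes $\gf_{2^{2m}}$. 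For the inverse I would use Lemma~\ref{th1} with $\phi(x)=x$ and $\bar\phi(x)=0$, so that $\phi(f_1(x))+\bar\phi(\Tr_m^{2m}(x))=x$ permutes $\gf_{2^{2m}}$ with inverse $x$ and $h(x)=(x^{t_1}+\delta)^{s_1}$; in characteristic two the lemma then yields
$$f^{-1}(x)=x+\bigl(g^{-1}(\Tr_m^{2m}(x))^{t_1}+\delta\bigr)^{s_1}.$$
Thus everything reduces to computing $g^{-1}(u)^{t_1}+\delta$ for $u\in\gf_{2^m}$.

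The core of the proof is the identity
$$g(x)^2=\bigl(x^{t_1}+\Tr_m^{2m}(\delta)\bigr)^3+\Tr_m^{2m}(\delta^3)\qquad(x\in\gf_{2^m}).$$
I would prove it by squaring: since $\Tr_m^{2m}(y)^2=\Tr_m^{2m}(y^2)$, one gets $g(x)^2=\Tr_m^{2m}\bigl((x^{t_1}+\delta)^{2s_1}\bigr)+x^2$, and with $c:=x^{t_1}\in\gf_{2^m}$ and $2s_1=2^m+2$ one has $(c+\delta)^{2^m+2}=(c+\delta^{2^m})(c^2+\delta^2)=c^3+c\delta^2+c^2\delta^{2^m}+\delta^{2^m+2}$. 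Applying $\Tr_m^{2m}$ annihilates the $\gf_{2^m}$-term $c^3$ and leaves $c^2\Tr_m^{2m}(\delta)+c\,\Tr_m^{2m}(\delta)^2+\delta^{2^m+2}+\delta^{2^{m+1}+1}$; together with $x^2=c^3$ (valid on $\gf_{2^m}$ because $3t_1=2^m+1\equiv 2\pmod{2^m-1}$) and the expansion $(c+\Tr_m^{2m}(\delta))^3=c^3+c^2\Tr_m^{2m}(\delta)+c\,\Tr_m^{2m}(\delta)^2+\Tr_m^{2m}(\delta)^3$, the identity reduces to the purely $\delta$-dependent relation $\Tr_m^{2m}(\delta)^3+\Tr_m^{2m}(\delta^3)=\delta^{2^m+2}+\delta^{2^{m+1}+1}$, which follows from $\Tr_m^{2m}(\delta)=\delta+\delta^{2^m}$ by expanding the cube. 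I expect the correct bookkeeping of the Frobenius exponents $2^m+2$, $2^{m+1}+1$, $3\cdot 2^m$ in this step to be the main obstacle; the rest is formal manipulation with Lemma~\ref{th1}.

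Granting the identity, write $u=g(x)$; then $\bigl(x^{t_1}+\Tr_m^{2m}(\delta)\bigr)^3=u^2+\Tr_m^{2m}(\delta^3)$. Since $m$ is odd, $\gcd(3,2^m-1)=1$, so cubing permutes $\gf_{2^m}$ with inverse $z\mapsto z^{(2^{m+1}-1)/3}$ (indeed $(2^{m+1}-1)/3$ is an integer and $3\cdot(2^{m+1}-1)/3=2^{m+1}-1\equiv 1\pmod{2^m-1}$). Hence
$$g^{-1}(u)^{t_1}=\bigl(u^2+\delta^3+\delta^{3\cdot 2^m}\bigr)^{(2^{m+1}-1)/3}+\Tr_m^{2m}(\delta),$$
using $\Tr_m^{2m}(\delta^3)=\delta^3+\delta^{3\cdot 2^m}$, and adding $\delta$ together with $\Tr_m^{2m}(\delta)+\delta=\delta^{2^m}$ gives $g^{-1}(u)^{t_1}+\delta=\bigl(u^2+\delta^3+\delta^{3\cdot 2^m}\bigr)^{(2^{m+1}-1)/3}+\delta^{2^m}$. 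Substituting this with $u=\Tr_m^{2m}(x)$ and $s_1=2^{m-1}+1$ into the displayed formula for $f^{-1}$ produces exactly the asserted expression, which finishes the proof.
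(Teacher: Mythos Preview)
Your proof is correct and follows essentially the same approach as the paper: both reduce via Lemma~\ref{th1} to the reduced map $g$ on $\gf_{2^m}$ and establish the same core identity $g(x)^2=(x^{t_1}+\delta+\delta^{2^m})^3+\delta^3+\delta^{3\cdot 2^m}$ (the paper phrases it as the factorization $g(x)=x^{2^{m-1}}\circ\bigl((x+\delta+\delta^{2^m})^3+\delta^3+\delta^{3\cdot 2^m}\bigr)\circ x^{t_1}$, which is the same statement after squaring). The only organizational difference is that you solve directly for $g^{-1}(u)^{t_1}+\delta$ rather than first writing out $g^{-1}(u)$ and then raising to the $t_1$-th power, a harmless shortcut.
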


\begin{proof}
	It follows from Lemma \ref{th1} that if $f(x)$ permutes $\gf_{2^{2^m}}$, then $g(x)=x+\left(x^{(2^m+1)/3}+\delta\right)^{2^{m-1}+1}+\left(x^{(2^m+1)/3}+\delta\right)^{2^{2m-1}+2^m}$ is a permutation polynomial over $\gf_{2^m}.$ 
	Since $(2^m+1)/3\cdot (3\cdot2^{m-1})\equiv1 \pmod{2^m-1}$, we have 
	\begin{align*}
		x^2 \circ g(x) \circ x^{3\cdot2^{m-1}}=&\,x^3+(x+\delta)^{2^m+2}+(x+\delta)^{2^{m+1}+1}\\
		=&\,(x+\delta+\delta^{2^m})^3+\delta^3+\delta^{3\cdot 2^m},
	\end{align*}
	or \begin{equation}\label{1eqq/2+1}
		g(x)= x^{2^{m-1}} \circ  \left((x+\delta+\delta^{2^m})^3+\delta^3+\delta^{3\cdot 2^m}\right)\circ x^{(2^m+1)/3}.
	\end{equation}
	Moreover, since $3 \cdot (2^{m+1}-1)/3 \equiv 1 \pmod{2^m-1},$ we get 
	\begin{align*}
		&\,	\left((x+\delta+\delta^{2^m})^3+\delta^3+\delta^{3\cdot 2^m}\right)^{-1}\\=&\,\left((x+\delta^3+\delta^{3\cdot 2^m})\circ x^3 \circ (x+\delta+\delta^{2^m})\right)^{-1}\\
		=&\, (x+\delta+\delta^{2^m}) \circ x^{(2^{m+1}-1)/3} \circ (x+\delta^3+\delta^{3\cdot 2^m})\\
		=&\, (x+\delta^3+\delta^{3\cdot 2^m})^{(2^{m+1}-1)/3}+\delta+\delta^{2^m}.
	\end{align*}		
	Together with Eq. \eqref{1eqq/2+1} yields that  the compositional inverse of $g(x)$ over $\gf_{2^m}$ is 
	\begin{align*}
		g^{-1}(x)=&\,x^{3\cdot2^{m-1}} \circ  \left((x+\delta^3+\delta^{3\cdot 2^m})^{(2^{m+1}-1)/3}+\delta+\delta^{2^m}\right) \circ x^2\\
		=&\,\left((x^2+\delta^3+\delta^{3\cdot 2^m})^{(2^{m+1}-1)/3}+\delta+\delta^{2^m}\right)^{3\cdot2^{m-1}}.
	\end{align*}
	
	Consequently, taking $\phi(x)=x$ and $\bar{\phi}(x)=0$ in Lemma \ref{th1}, it implies by Lemma \ref{th1} that the compositional inverse of $f(x)$ over $\gf_{2^{2m}}$ is 
	$$f^{-1}(x)=x+\left((\Tr_m^{2m}(x)^2+\delta^3+\delta^{3\cdot 2^m})^{(2^{m+1}-1)/3}+\delta^{2^m}\right)^{2^{m-1}+1},$$
	which is the desired result. 
\end{proof}

In \cite{li2020some}, a proposition was presented regarding the  permutation property of the polynomial
$x+\left(\Tr_m^{2m}(x)^{(2^{m+1}-1)/3}+\delta\right)^{3}$ over $\gf_{2^{2m}}$, where $\delta \in \gf_{2^{2m}}$ and $m$ is odd.  We investigate the compositional inverse of this calss of permutation polynomial. 
\begin{theorem}
	Let	 $\delta \in \gf_{2^{2m}}$ and $m$ be odd. Then the compositional inverse of permutation polynomial 
	$$f(x)=x+\left(\Tr_m^{2m}(x)^{(2^{m+1}-1)/3}+\delta\right)^{3}$$
	over $\gf_{2^{2m}}$ is 		$$f^{-1}(x)=x+\left((\Tr_m^{2m}(x)+\delta^{2^{m+1}+1}+\delta^{2+2^m})^{(2^{m+1}-1)/3}+\delta^{2^m}\right)^{3}.$$
\end{theorem}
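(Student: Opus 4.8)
The plan is to apply Lemma~\ref{th1} with the right data. Here $p=2$, $n=2$, $k=1$, $b_1=1$, $t_1=(2^{m+1}-1)/3$, $s_1=3$, $\delta\in\gf_{2^{2m}}$, $f_1(x)=x$ and $\varphi(x)=x$, since $\Tr_m^{2m}(x)\circ x = x\circ\Tr_m^{2m}(x)$ and $x$ is trivially injective on each fiber $\Tr_m^{2m}(a)^{-1}$. By Lemma~\ref{th1}, the associated polynomial on $\gf_{2^m}$ is
$$g(x)=\sum_{j=0}^{1}\bigl(x^{(2^{m+1}-1)/3}+\delta\bigr)^{3\cdot 2^{mj}}+x
      =\bigl(x^{(2^{m+1}-1)/3}+\delta\bigr)^{3}+\bigl(x^{(2^{m+1}-1)/3}+\delta\bigr)^{3\cdot 2^{m}}+x,$$
and $f$ permutes $\gf_{2^{2m}}$ iff $g$ permutes $\gf_{2^m}$; this permutation property is already granted by the cited proposition of \cite{li2020some}. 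So the real content is computing $g^{-1}$ on $\gf_{2^m}$ and then plugging into the inverse formula of Lemma~\ref{th1} with $\phi(x)=x$, $\bar\phi(x)=0$ (so that $\phi(f_1(x))+\bar\phi(\lambda(x))=x$ is trivially a permutation and its inverse is $x$), which yields $f^{-1}(x)=x - h\bigl(g^{-1}(\Tr_m^{2m}(x))\bigr)$ with $h(x)=\bigl(x^{(2^{m+1}-1)/3}+\delta\bigr)^{3}$; over characteristic $2$ the minus sign disappears.

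The key step is to factor $g$ on $\gf_{2^m}$ as a composition of easily invertible maps. First note that on $\gf_{2^m}$ one has $x^{2^m}=x$, so $(x^{(2^{m+1}-1)/3}+\delta)^{3\cdot 2^m}=(x^{(2^{m+1}-1)/3})^{3\cdot 2^m}+\delta^{3\cdot 2^m}$; and since $m$ is odd, $\gcd(3,2^m-1)=3$ and $(2^{m+1}-1)/3$ is well-defined with $3\cdot(2^{m+1}-1)/3\equiv 2^{m+1}\equiv 2\pmod{2^m-1}$, so $x^{(2^{m+1}-1)/3}$ is a permutation of $\gf_{2^m}$ whose cube is $x^{2}$ in the exponent sense. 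I would substitute $y=x^{(2^{m+1}-1)/3}$, i.e. precompose with $x^{(2^{m+1}-1)/3}$ on the right (its inverse is $x\mapsto x^{\,(\text{appropriate exponent})}$, in fact $x^3$ since then $3\cdot(2^{m+1}-1)/3\equiv 2\equiv$, need the exponent $e$ with $e\cdot(2^{m+1}-1)/3\equiv1$). Following the same pattern as the preceding theorem in the excerpt, write $g(x)= G(x)\circ x^{(2^{m+1}-1)/3}$ where $G$ involves $y^3$, $y^2$ (from $y^{3\cdot 2^m}$, but note $3\cdot 2^m\equiv 3\cdot 2^m$; actually $y^{3\cdot 2^m}=(y^3)^{2^m}=(y^3)$ only after reduction, so one must be careful whether we are on $\gf_{2^m}$ where $y^{2^m}=y$), plus the linear term coming from $x$. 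After expanding $(y+\delta)^3+(y+\delta)^{3\cdot 2^m}+(\text{the }x\text{-term re-expressed in }y)$ and collecting, I expect $G$ to collapse — as in the previous proof where $x^3+(x+\delta)^{2^m+2}+(x+\delta)^{2^{m+1}+1}=(x+\delta+\delta^{2^m})^3+\delta^3+\delta^{3\cdot 2^m}$ — to something of the shape $(\,\cdot\,+c)^{(2^{m+1}-1)/3}$ composed with an affine map, giving $g^{-1}$ as an affine-then-power-then-affine composition.

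Concretely I anticipate the identity $g^{-1}(x)=\bigl((x+\delta^{2^{m+1}+1}+\delta^{2+2^m})^{(2^{m+1}-1)/3}+\delta^{2^m}\bigr)^{\text{(some exponent)}}$ or, after the outer power cancels against the $x^{(2^{m+1}-1)/3}$ that was stripped off, exactly the inner bracket shown in the statement; substituting $x\mapsto \Tr_m^{2m}(x)$ and adding $x$ then gives the claimed $f^{-1}$. The main obstacle will be the bookkeeping of exponents modulo $2^m-1$: verifying that $(2^{m+1}-1)/3$ is an integer and a permutation exponent when $m$ is odd, identifying its compositional inverse exponent, and correctly tracking how the Frobenius $2^m$-power acts when we move between $\gf_{2^{2m}}$ (where it is nontrivial on $\delta$) and $\gf_{2^m}$ (where it is the identity on the variable). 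Once the factorization $g = (\text{affine})\circ x^{(2^{m+1}-1)/3}\circ(\text{cube-related permutation})$ is pinned down, inverting is mechanical: reverse the order and invert each factor, exactly as in the proof of the immediately preceding theorem, and then apply Lemma~\ref{th1}.
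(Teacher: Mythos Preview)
Your overall strategy matches the paper's proof exactly: apply Lemma~\ref{th1} with $f_1(x)=x$, $\phi(x)=x$, $\bar\phi(x)=0$, reduce to inverting
\[
g(x)=x+\bigl(x^{(2^{m+1}-1)/3}+\delta\bigr)^{3}+\bigl(x^{(2^{m+1}-1)/3}+\delta\bigr)^{3\cdot 2^{m}}
\]
on $\gf_{2^m}$, factor $g$ through the power map $x^{(2^{m+1}-1)/3}$, collapse the result to an affine-cube-affine composition, and invert. The paper does precisely $g(x)\circ x^3=(x+\delta+\delta^{2^m})^3+\delta^{2^{m+1}+1}+\delta^{2+2^m}$ and proceeds from there.

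However, your exponent bookkeeping contains errors that, as written, would derail the computation. First, when $m$ is odd one has $2^m\equiv 2\pmod 3$, so $2^m-1\equiv 1\pmod 3$ and hence $\gcd(3,2^m-1)=1$, not $3$; this is what makes $x^3$ (and its inverse $x^{(2^{m+1}-1)/3}$) a permutation of $\gf_{2^m}$. Second, $3\cdot(2^{m+1}-1)/3=2^{m+1}-1\equiv 1\pmod{2^m-1}$, not $2$ as you wrote (you dropped the $-1$); thus the cube of $x^{(2^{m+1}-1)/3}$ is the identity on $\gf_{2^m}$, not $x^2$, and the compositional inverse of $x^{(2^{m+1}-1)/3}$ is simply $x^3$. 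Third, $(a+b)^{3\cdot 2^m}\neq a^{3\cdot 2^m}+b^{3\cdot 2^m}$ in general; what you need is $(x^{(2^{m+1}-1)/3}+\delta)^{3\cdot 2^m}=\bigl((x^{(2^{m+1}-1)/3}+\delta)^{2^m}\bigr)^3=(x^{(2^{m+1}-1)/3}+\delta^{2^m})^3$ for $x\in\gf_{2^m}$. With these three corrections in place, precomposing with $x^3$ gives $g(x^3)=x^3+(x+\delta)^3+(x+\delta^{2^m})^3=(x+\delta+\delta^{2^m})^3+\delta^{2^{m+1}+1}+\delta^{2+2^m}$, and the rest of your plan goes through verbatim to produce the stated $f^{-1}$.
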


\begin{proof}
	Since $f(x)$ permutes $\gf_{2^{2^m}}$, we have that  $g(x)=x+\left(x^{(2^{m+1}-1)/3}+\delta\right)^{3}+\left(x^{(2^{m+1}-1)/3}+\delta\right)^{3\cdot2^m}$ is a permutation polynomial over $\gf_{2^m}$ by Lemma \ref{th1}. 
	Moreover,  we have 
	%	 $(2^m+1)/3\cdot (3\cdot2^{m-1})\equiv1 \pmod{2^m-1}$, we have 
	\begin{align*}
		g(x) \circ x^3=&\,x^3+(x+\delta)^{3}+(x+\delta)^{3\cdot 2^m}\\
		=&\,(x+\delta+\delta^{2^m})^3+\delta^{2^{m+1}+1}+\delta^{2+2^m}
	\end{align*}
	because of $3 \cdot (2^{m+1}-1)/3 \equiv 1 \pmod{2^m-1},$
	or, equivalently,  \begin{equation*}\label{1eq3}
		g(x)=\left((x+\delta+\delta^{2^m})^3+\delta^{2^{m+1}+1}+\delta^{2+2^m}\right) \circ  x^{(2^{m+1}-1)/3 }.
	\end{equation*}
	Consequently, the compositional inverse of $g(x)$ over $\gf_{2^m}$ is 
	\begin{align*}
		g^{-1}(x)=	&\, x^3 \circ	\left((x+\delta+\delta^{2^m})^3+\delta^{2^{m+1}+1}+\delta^{2+2^m}\right)^{-1}\\
		=&\,x^3 \circ \left((x+\delta^{2^{m+1}+1}+\delta^{2+2^m})\circ x^3 \circ (x+\delta+\delta^{2^m})\right)^{-1}\\
		=&\, x^3 \circ (x+\delta+\delta^{2^m}) \circ x^{(2^{m+1}-1)/3} \circ (x+\delta^{2^{m+1}+1}+\delta^{2+2^m})\\
		=&\, \left( (x+\delta^{2^{m+1}+1}+\delta^{2+2^m})^{(2^{m+1}-1)/3}+\delta+\delta^{2^m})\right)^3.
	\end{align*}			
	Hence, taking $\phi(x)=x$ and $\bar{\phi}(x)=0$ in Lemma \ref{th1},  it implies by Lemma \ref{th1} that the compositional inverse of $f(x)$ over $\gf_{2^{2m}}$ is 
	$$f^{-1}(x)=x+\left((\Tr_m^{2m}(x)+\delta^{2^{m+1}+1}+\delta^{2+2^m})^{(2^{m+1}-1)/3}+\delta^{2^m}\right)^{3}.$$
	We are done.
\end{proof}

In the work by \cite{li2020some}, a   proposition was established concerning the  permutation property of  the polynomial
$x+\left(\Tr_m^{2m}(x)^{2^{\frac{m+1}{2}-1}}+\delta\right)^{2^{\frac{m+1}{2}+1}}$ over $\gf_{2^{2m}}$, where $\delta \in \gf_{2^{2m}}$ and $m$ is odd. In the subsequent theorem, our focus revolves around exploring  the compositional inverse of this particular class of permutation polynomial.
\begin{theorem}
	Let	 $\delta \in \gf_{2^{2m}}$ and $m$ be odd. Then the compositional inverse of permutation polynomial 
	$$f(x)=x+\left(\Tr_m^{2m}(x)^{2^{\frac{m+1}{2}}-1}+\delta\right)^{2^{\frac{m+1}{2}}+1}$$
	over $\gf_{2^{2m}}$ is 			$$f^{-1}(x)=x+\left((\Tr_m^{2m}(x)+\delta^{2^{\frac{m+1}{2}}+2^m}+\delta^{2^{\frac{3m+1}{2}}+1})^{2^{\frac{m+1}{2}}-1}+\delta^{2^m}\right)^{2^{\frac{m+1}{2}}+1}.$$
\end{theorem}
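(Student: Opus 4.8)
The plan is to mirror the two preceding theorems: push $f$ down to its associated map $g$ on $\gf_{2^m}$ by Lemma~\ref{th1}, write $g$ as a composition of monomials and additive shifts, invert that composition, and then recover $f^{-1}$ from the compositional-inverse part of Lemma~\ref{th1} with $\phi(x)=x$ and $\bar\phi(x)=0$.

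Set $r=2^{(m+1)/2}$, an integer because $m$ is odd. The arithmetic fact that drives everything is $(r-1)(r+1)=r^2-1=2^{m+1}-1\equiv 1\pmod{2^m-1}$, so on $\gf_{2^m}$ the monomials $x^{r-1}$ and $x^{r+1}$ are mutually inverse permutations (both fixing $0$). Since $\Tr_m^{2m}(x)\circ x=x\circ\Tr_m^{2m}(x)$, applying Lemma~\ref{th1} with $n=2$, $k=1$, $b_1=1$, $t_1=r-1$, $s_1=r+1$, $f_1(x)=x$, $\varphi(x)=x$ shows that, as $f$ permutes $\gf_{2^{2m}}$, the polynomial
$$g(x)=x+\bigl(x^{r-1}+\delta\bigr)^{r+1}+\bigl(x^{r-1}+\delta\bigr)^{(r+1)2^m}$$
permutes $\gf_{2^m}$; for $x\in\gf_{2^m}$ the identity $x^{(r-1)2^m}=x^{r-1}$ turns the last term into $\bigl(x^{r-1}+\delta^{2^m}\bigr)^{r+1}$.

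The core step is the decomposition of $g$. Precompose with $x^{r+1}$; using $(x^{r+1})^{r-1}=x$ on $\gf_{2^m}$ gives $g(x^{r+1})=x^{r+1}+(x+\delta)^{r+1}+(x+\delta^{2^m})^{r+1}$. Because $r$ is a power of $2$, $(u+v)^{r+1}=u^{r+1}+u^rv+uv^r+v^{r+1}$, so expanding the two shifted powers and cancelling in characteristic $2$ (the three copies of $x^{r+1}$ collapse to one) yields $g(x^{r+1})=x^{r+1}+Tx^r+T^rx+\delta^{r+1}+\delta^{(r+1)2^m}$, where $T:=\Tr_m^{2m}(\delta)=\delta+\delta^{2^m}\in\gf_{2^m}$. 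Completing to a power via $(x+T)^{r+1}=x^{r+1}+Tx^r+T^rx+T^{r+1}$ and then expanding $T^{r+1}=(\delta+\delta^{2^m})^{r+1}$ shows the residual constant is $T^{r+1}+\delta^{r+1}+\delta^{(r+1)2^m}=\delta^{r+2^m}+\delta^{1+r2^m}=:D$, hence $g(x^{r+1})=(x+T)^{r+1}+D$. Equivalently $g(x)=(x+D)\circ x^{r+1}\circ(x+T)\circ x^{r-1}$ on $\gf_{2^m}$, and therefore
$$g^{-1}(x)=x^{r+1}\circ(x+T)\circ x^{r-1}\circ(x+D)=\bigl((x+D)^{r-1}+T\bigr)^{r+1}.$$

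It remains to feed this into Lemma~\ref{th1} with $\phi(x)=x$ and $\bar\phi(x)=0$, so that $\phi(f_1(x))+\bar\phi(\Tr_m^{2m}(x))=x$ permutes $\gf_{2^{2m}}$ and $f^{-1}(x)=x+h\bigl(g^{-1}(\Tr_m^{2m}(x))\bigr)$ with $h(x)=(x^{r-1}+\delta)^{r+1}$. A short check gives $D\in\gf_{2^m}$ (one verifies $D^{2^m}=D$ from $\delta^{2^{2m}}=\delta$), so $\bigl(g^{-1}(\Tr_m^{2m}(x))\bigr)^{r-1}=(\Tr_m^{2m}(x)+D)^{r-1}+T$; since $T+\delta=\delta^{2^m}$ this makes $h\bigl(g^{-1}(\Tr_m^{2m}(x))\bigr)=\bigl((\Tr_m^{2m}(x)+D)^{r-1}+\delta^{2^m}\bigr)^{r+1}$, and substituting $r=2^{(m+1)/2}$ together with $D=\delta^{2^{(m+1)/2}+2^m}+\delta^{2^{(3m+1)/2}+1}$ reproduces the stated formula. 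I expect the only real difficulty to be the middle paragraph: correctly expanding the two $(r+1)$-th powers, carrying through the characteristic-$2$ cancellations, and recognising that the leftover constant coincides with the particular binomial $D$ appearing in the statement; the surrounding steps are a routine chain of substitutions using the inverse pair $x^{r-1}\leftrightarrow x^{r+1}$.
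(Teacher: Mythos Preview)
Your proposal is correct and follows essentially the same route as the paper: both reduce to the auxiliary map $g$ on $\gf_{2^m}$ via Lemma~\ref{th1}, precompose with $x^{r+1}$ (using $(r-1)(r+1)\equiv1\pmod{2^m-1}$) to obtain $g(x^{r+1})=(x+T)^{r+1}+D$ with $T=\delta+\delta^{2^m}$ and $D=\delta^{r+2^m}+\delta^{r2^m+1}$, invert the resulting chain of shifts and monomials, and then apply Lemma~\ref{th1} with $\phi(x)=x$, $\bar\phi(x)=0$. Your additional explicit check that $D\in\gf_{2^m}$ and the reduction of $h(g^{-1}(\Tr_m^{2m}(x)))$ to the stated form are a welcome bit of detail that the paper leaves implicit.
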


\begin{proof}
	According to by Lemma \ref{th1}, 
	we have that  $$g(x)=x+\left(x^{2^{\frac{m+1}{2}}-1}+\delta\right)^{2^{\frac{m+1}{2}}+1}+\left(x^{2^{\frac{m+1}{2}}-1}+\delta\right)^{2^{\frac{m+1}{2}}\cdot 2^m+2^m}$$ is a permutation polynomial over $\gf_{2^m}$ if 	$f(x)$ permutes $\gf_{2^{2^m}}$. 
	Since $(2^{\frac{m+1}{2}}-1)(2^{\frac{m+1}{2}}+1)-2(2^m-1)=1,$ we obtatin 
	\begin{align*}
		g(x) \circ x^{2^{\frac{m+1}{2}}+1}=&\,x^{2^{\frac{m+1}{2}}+1}+(x+\delta)^{2^{\frac{m+1}{2}}+1}+(x+\delta^{2^m})^{2^{\frac{m+1}{2}}+1}\\
		=&\, x^{2^{\frac{m+1}{2}}+1}+(\delta+\delta^{2^m})x^{2^{\frac{m+1}{2}}}+(\delta+\delta^{2^m})^{2^{\frac{m+1}{2}}}x+\delta^{2^{\frac{m+1}{2}+1}}+\delta^{2^{\frac{3m+1}{2}}+2^m}\\
		=&\, (x+\delta+\delta^{2^m})^{2^{\frac{m+1}{2}}+1}+\delta^{2^{\frac{m+1}{2}}+2^m}+\delta^{2^{\frac{3m+1}{2}}+1},
	\end{align*}
	or, equivalently, 
	$$g(x)=\left((x+\delta+\delta^{2^m})^{2^{\frac{m+1}{2}}+1}+\delta^{2^{\frac{m+1}{2}}+2^m}+\delta^{2^{\frac{3m+1}{2}}+1}\right)\circ x^{2^{\frac{m+1}{2}}-1}.$$
	This implies that 
	the compositional inverse of $g(x)$ over $\gf_{2^m}$ is 
	\begin{align}\label{1eq2m+1/2-1}
		g^{-1}(x)=&\,x^{2^{\frac{m+1}{2}}+1}\circ \left((x+\delta^{2^{\frac{m+1}{2}}+2^m}+\delta^{2^{\frac{3m+1}{2}}+1})\circ x^{2^{\frac{m+1}{2}}+1}\circ (x+\delta+\delta^{2^m})\right)^{-1}\nonumber\\
		=&\, x^{2^{\frac{m+1}{2}}+1}\circ (x+\delta+\delta^{2^m})\circ x^{2^{\frac{m+1}{2}}-1} \circ (x+\delta^{2^{\frac{m+1}{2}}+2^m}+\delta^{2^{\frac{3m+1}{2}}+1})\nonumber\\
		=&\,\left((x+\delta^{2^{\frac{m+1}{2}}+2^m}+\delta^{2^{\frac{3m+1}{2}}+1})^{2^{\frac{m+1}{2}}-1}+\delta+\delta^{2^m}\right)^{2^{\frac{m+1}{2}}+1}.
	\end{align}		 
	Hence, taking $\phi(x)=x$ and $\bar{\phi}(x)=0$ in Lemma \ref{th1},  it implies by Lemma \ref{th1} and Eq. \eqref{1eq2m+1/2-1} that the compositional inverse of $f(x)$ over $\gf_{2^{2m}}$ is 
	$$f^{-1}(x)=x+\left((\Tr_m^{2m}(x)+\delta^{2^{\frac{m+1}{2}}+2^m}+\delta^{2^{\frac{3m+1}{2}}+1})^{2^{\frac{m+1}{2}}-1}+\delta^{2^m}\right)^{2^{\frac{m+1}{2}}+1}.$$
	We complete the proof.
\end{proof}

In \cite{li2020some}, a  proposition was presented regarding the permutation property of the polynomial
$x+\left(\Tr_m^{2m}(x)^k+\delta\right)^s$ over $\gf_{2^{2m}}$, where $\delta \in \gf_{2^m}$. It is observed that  the permutation polynomial of this form over $\gf_{2^{2m}}$ is an involution. Given the similarity in proof, we omit it here.

\begin{theorem}
	Let $\delta \in \gf_{2^m}$ and $k, s$ be positive integers.
	Then the polynomial $$f(x)=x+\left(\Tr_m^{2m}(x)^k+\delta\right)^s$$ is an involution over $\gf_{2^{2m}}.$
\end{theorem}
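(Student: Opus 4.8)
The plan is to prove the statement by computing $f(f(x))$ directly and showing it equals $x$ for every $x\in\gf_{2^{2m}}$; this simultaneously shows $f$ is a bijection and that it coincides with its own compositional inverse, i.e.\ it is an involution. The single fact that drives the argument is that, in characteristic $2$, the trace $\Tr_m^{2m}$ of any element of the subfield $\gf_{2^m}$ vanishes: if $c\in\gf_{2^m}$ then $\Tr_m^{2m}(c)=c+c^{2^m}=c+c=0$.

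First I would abbreviate $T:=\Tr_m^{2m}(x)$ and record that $T\in\gf_{2^m}$; since also $\delta\in\gf_{2^m}$ by hypothesis, the quantity $(T^k+\delta)^s$ lies in $\gf_{2^m}$. Hence $\Tr_m^{2m}\!\big((T^k+\delta)^s\big)=0$ by the observation above, and because $\Tr_m^{2m}$ is additive,
\[
\Tr_m^{2m}(f(x))=\Tr_m^{2m}(x)+\Tr_m^{2m}\!\big((T^k+\delta)^s\big)=T,
\]
so applying $f$ does not change the trace value. Substituting this back into the definition of $f$ gives
\[
f(f(x))=f(x)+\big(\Tr_m^{2m}(f(x))^k+\delta\big)^s=\big(x+(T^k+\delta)^s\big)+(T^k+\delta)^s=x,
\]
where the last equality uses characteristic $2$ to cancel the two copies of $(T^k+\delta)^s$. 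Since this holds for all $x$, $f\circ f$ is the identity on $\gf_{2^{2m}}$, so $f$ is a permutation with $f^{-1}=f$, as claimed. (Alternatively, one may first invoke Lemma~\ref{th1} in the single-summand case with $f_1(x)=x$, $\varphi(x)=x$, $n=2$ and $p=2$: hypotheses (i) and (ii) hold trivially, and the associated polynomial $g(x)=\big(x^k+\delta\big)+\big(x^k+\delta\big)^{2^m}+x$ equals $x$ on $\gf_{2^m}$ because $x,\delta\in\gf_{2^m}$, whence $f$ permutes $\gf_{2^{2m}}$; but the direct computation above already yields the stronger involution property.)

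There is essentially no hard step here: the only point requiring care is to note that both $\Tr_m^{2m}(x)$ and $\delta$ lie in $\gf_{2^m}$, so that $(T^k+\delta)^s$ is fixed by the Frobenius $y\mapsto y^{2^m}$ and therefore has zero trace; everything else is additivity of the trace and cancellation in characteristic $2$. In particular, the hypothesis $\delta\in\gf_{2^m}$ (rather than merely $\delta\in\gf_{2^{2m}}$) is exactly what is being used, and the exponents $k,s$ play no role beyond making $(T^k+\delta)^s$ a well-defined element of $\gf_{2^m}$.
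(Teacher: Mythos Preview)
Your direct computation of $f\circ f=\mathrm{id}$ is correct and complete: the key observation that $(T^k+\delta)^s\in\gf_{2^m}$ (because both $T=\Tr_m^{2m}(x)$ and $\delta$ lie in $\gf_{2^m}$) forces its trace to vanish, so the trace value is preserved by $f$ and the two correction terms cancel in characteristic~$2$. The paper itself omits the proof, indicating only that it is ``similar'' to the surrounding results, all of which are obtained via Lemma~\ref{th1}; your parenthetical alternative captures precisely that intended route. One small slip there: the associated polynomial should be $g(x)=(x^k+\delta)^s+(x^k+\delta)^{s\cdot 2^m}+x$, not $(x^k+\delta)+(x^k+\delta)^{2^m}+x$ --- the exponent $s$ is missing --- though the conclusion $g(x)=x$ on $\gf_{2^m}$ is unaffected since $(x^k+\delta)^s\in\gf_{2^m}$ is fixed by the $2^m$-power Frobenius and the two terms still cancel. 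Your primary argument is more elementary than the Lemma~\ref{th1} route and has the advantage of establishing bijectivity and the involution property in one stroke, without needing to invoke the AGW machinery at all.
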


The work by \cite{li2020some} introduced a proposition concerning   the permutation property of the polynomial
$x+\left(\Tr_m^{2m}(x)^k+\delta\right)^{i(2^m+1)}$ over $\gf_{2^{2m}}$, where $\delta \in \gf_{2^{2m}}$ and $i$ is a positive integer with $i < 2^m-1$. It is noted  that the permutation polynomial of this form over $\gf_{2^{2m}}$ is an involution. Given the similarity the proof, we will omit it at this juncture.  

\begin{theorem}
	Let $\delta \in \gf_{2^{2m}}$ and $k, s$ be positive integers with  $i < 2^m-1.$ 	Then the polynomial $$f(x)=x+\left(\Tr_m^{2m}(x)^k+\delta\right)^{i(2^m+1)}$$ is an involution over $\gf_{2^{2m}}.$
\end{theorem}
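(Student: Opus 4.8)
The plan is to recognize $f(x)$ as a special case of the family treated in Lemma~\ref{th1} and then simply read off its compositional inverse. Take $p=2$, $n=2$, a single summand with $b_1=1$, inner exponent $t_1$ equal to the $k$ of the present statement, $s_1=i(2^m+1)$, and $f_1(x)=x$. Since $\Tr_m^{2m}(x)\circ x=x\circ\Tr_m^{2m}(x)$, condition~(i) of Lemma~\ref{th1} holds with $\varphi(x)=x$, and condition~(ii) is trivial because $f_1(x)=x$ is injective. Hence $f(x)$ permutes $\gf_{2^{2m}}$ if and only if
$$g(x)=(x^{k}+\delta)^{i(2^m+1)}+(x^{k}+\delta)^{i(2^m+1)2^m}+x$$
permutes $\gf_{2^m}$.

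The one substantive observation is that $(x^{k}+\delta)^{2^m+1}$ equals the norm $N_{\gf_{2^{2m}}/\gf_{2^m}}(x^{k}+\delta)$ and therefore lies in $\gf_{2^m}$; consequently $(x^{k}+\delta)^{i(2^m+1)}\in\gf_{2^m}$ and is fixed by the $2^m$-power map. Thus the first two summands of $g(x)$ are equal, and, the characteristic being $2$, they cancel, so $g(x)=x$. In particular $g$ permutes $\gf_{2^m}$ (confirming that $f$ permutes $\gf_{2^{2m}}$), and $g^{-1}(x)=x$.

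Finally, apply the compositional-inverse part of Lemma~\ref{th1} with $\phi(x)=x$ and $\bar{\phi}(x)=0$; then $\phi(f_1(x))+\bar{\phi}(\Tr_m^{2m}(x))=x$ trivially permutes $\gf_{2^{2m}}$ with inverse $x$, and with $h(x)=(x^{k}+\delta)^{i(2^m+1)}$ the formula yields
$$f^{-1}(x)=x-h\!\left(g^{-1}(\Tr_m^{2m}(x))\right)=x+(\Tr_m^{2m}(x)^{k}+\delta)^{i(2^m+1)}=f(x),$$
using $-1=1$ in characteristic $2$. Hence $f$ is an involution. Equivalently, one can check $f(f(x))=x$ directly: because $h(x)\in\gf_{2^m}$ we have $\Tr_m^{2m}(h(x))=h(x)+h(x)=0$, so $\Tr_m^{2m}(f(x))=\Tr_m^{2m}(x)$, whence $h(f(x))=h(x)$ and $f(f(x))=x+h(x)+h(x)=x$. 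The hypothesis $i<2^m-1$ is not needed for the argument; it only matches the statement of the corresponding proposition in \cite{li2020some}. The main (and quite mild) point is noticing that the exponent $i(2^m+1)$ forces the perturbation term into the subfield $\gf_{2^m}$, which is precisely what keeps $\Tr_m^{2m}$ invariant under $f$ and collapses $g$ to the identity.
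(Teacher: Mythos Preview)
Your proof is correct and is exactly the argument the paper has in mind: the paper omits this proof as being ``similar'' to the surrounding results, all of which proceed via Lemma~\ref{th1} with $f_1(x)=x$, $\phi(x)=x$, $\bar\phi(x)=0$, and the key observation here---that $(x^{k}+\delta)^{i(2^m+1)}$ lies in $\gf_{2^m}$ when $x\in\gf_{2^m}$, so $g(x)$ collapses to $x$---is precisely what drives the involution. Your alternative direct verification via $\Tr_m^{2m}(f(x))=\Tr_m^{2m}(x)$ is a welcome sanity check.
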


In the research presented by \cite{li2020some}, a  proposition was outlined regarding  the permutation property of the polynomial
$x+\left(\Tr_m^{2m}(x)^k+\delta\right)^{2^i}+\left(\Tr_m^{2m}(x)^k+\delta\right)^{2^j}$ over $\gf_{2^{2m}}$, where $\delta \in \gf_{2^{2m}}$ and $i \neq j$. We will study the compositional inverse of this class of permutation polynomial over $\gf_{2^{2m}}$ in the following theorem.    Due to the similarity in the proof, we will omit it in this context. 

\begin{theorem}
	Let $\delta \in \gf_{2^{2m}}$ and $i\neq j.$ Then the compositional inverse of permutation polynomial $$f(x)=x+\left(\Tr_m^{2m}(x)^k+\delta\right)^{2^i}+\left(\Tr_m^{2m}(x)^k+\delta\right)^{2^j}$$  over $\gf_{2^{2m}}$ is 
	\begin{align*}f^{-1}(x)=&\,x+\left((\Tr_m^{2m}(x)+\delta^{2^i}+\delta^{2^{i+m}}+\delta^{2^j}+\delta^{2^{j+m}})^k+\delta\right)^{2^i}\\&\,+\left((\Tr_m^{2m}(x)+\delta^{2^i}+\delta^{2^{i+m}}+\delta^{2^j}+\delta^{2^{j+m}})^k+\delta\right)^{2^j}.\end{align*}
\end{theorem}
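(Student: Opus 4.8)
The plan is to derive everything from Lemma~\ref{th1} applied with $p=2$, $n=2$, two summands ($b_1=b_2=1$), $t_1=t_2=k$, $s_1=2^i$, $s_2=2^j$, $f_1(x)=x$, and $\varphi(x)=x$; this is a legitimate choice since $\Tr_m^{2m}(x)\circ x=x\circ\Tr_m^{2m}(x)$ and $x$ is trivially injective on every fibre $\Tr_m^{2m}(a)^{-1}$. First I would compute the companion map on $\gf_{2^m}$,
$$g(x)=\sum_{j'=0}^{1}\left[(x^k+\delta)^{2^i\cdot 2^{mj'}}+(x^k+\delta)^{2^j\cdot 2^{mj'}}\right]+x,$$
and simplify it. For $x\in\gf_{2^m}$ one has $(x^k)^{2^m}=x^k$, hence $(x^k+\delta)^{2^{i+m}}=(x^k+\delta^{2^m})^{2^i}$, and additivity of the $2^i$-th power in characteristic $2$ collapses $(x^k+\delta)^{2^i}+(x^k+\delta^{2^m})^{2^i}$ to $\delta^{2^i}+\delta^{2^{i+m}}$; the $j$-terms behave identically. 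Therefore $g(x)=x+c$ with $c:=\delta^{2^i}+\delta^{2^{i+m}}+\delta^{2^j}+\delta^{2^{j+m}}$, which is a permutation of $\gf_{2^m}$ with $g^{-1}(x)=x+c$. In particular $f(x)$ permutes $\gf_{2^{2m}}$ by Lemma~\ref{th1}, so the claim that $f$ is a permutation polynomial is justified.

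Next I would feed this into the inversion formula of Lemma~\ref{th1} with $\phi(x)=x$ and $\bar\phi(x)=0$. Then $\phi(f_1(x))+\bar\phi(\Tr_m^{2m}(x))=x$, which trivially permutes $\gf_{2^{2m}}$, so its compositional inverse is the identity map. With $h(x)=(x^k+\delta)^{2^i}+(x^k+\delta)^{2^j}$, the formula gives
$$f^{-1}(x)=x-h\!\left(g^{-1}(\Tr_m^{2m}(x))\right)=x+h\!\left(\Tr_m^{2m}(x)+c\right),$$
using $-1=1$ in characteristic $2$ together with $g^{-1}(\Tr_m^{2m}(x))=\Tr_m^{2m}(x)+c$. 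Substituting the value of $c$ and expanding $h$ yields precisely the stated expression for $f^{-1}(x)$.

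The argument is routine and parallels the proofs of the preceding theorems in this section; the only step needing genuine care is the simplification $g(x)=x+c$, where one must correctly track the Frobenius action $\delta\mapsto\delta^{2^m}$ on the constant $\delta\in\gf_{2^{2m}}\setminus\gf_{2^m}$ while $x$ ranges over $\gf_{2^m}$, and verify that the $x$-dependent cross terms cancel. Once this identity is in hand, unwinding Lemma~\ref{th1} with $\phi(x)=x$, $\bar\phi(x)=0$ is immediate.
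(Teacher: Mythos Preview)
Your proposal is correct and follows precisely the route the paper intends: the paper omits the proof ``due to the similarity,'' meaning it is obtained from Lemma~\ref{th1} with $f_1(x)=x$, $\phi(x)=x$, $\bar\phi(x)=0$, exactly as you carry out. Your simplification $g(x)=x+c$ with $c=\delta^{2^i}+\delta^{2^{i+m}}+\delta^{2^j}+\delta^{2^{j+m}}$ is the key computation, and the rest is the routine application of the inversion formula.
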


In \cite{li2020some}, a  proposition were presented regarding  the permutation property of the polynomials
$x+\left(\Tr_m^{2m}(x)^k+\delta\right)^{2^i+1}+\left(\Tr_m^{2m}(x)^k+\delta\right)^{2^m+2^i}$ and $x+\left(\Tr_m^{2m}(x)^k+\delta\right)^{2^i+1}+\left(\Tr_m^{2m}(x)^k+\delta\right)^{2^{m+i}+1}$ over $\gf_{2^{2m}}$, where $\delta \in \gf_{2^{2m}}$. We aim to provide their compositional inverses over $\gf_{2^{2m}}.$

\begin{theorem}
	Let $\delta \in \gf_{2^{2m}}$. For a integer $i$ with $0< i< m,$
	the compositional inverses  of  $f_2(x)=x+\left(\Tr_m^{2m}(x)^k+\delta\right)^{2^i+1}+\left(\Tr_m^{2m}(x)^k+\delta\right)^{2^m+2^i}$  and $f_3(x)=x+\left(\Tr_m^{2m}(x)^k+\delta\right)^{2^i+1}+\left(\Tr_m^{2m}(x)^k+\delta\right)^{2^{m+i}+1}$ over $\gf_{2^{2m}}$ are 
	\begin{align*}
		f_2^{-1}(x)=x+\left((\Tr_m^{2m}(x)+(\delta+\delta^{2^m})^{2^i+1})^k+\delta\right)^{2^i+1}\\+\left((\Tr_m^{2m}(x)+(\delta+\delta^{2^m})^{2^i+1})^k+\delta\right)^{2^i+2^m}
	\end{align*}    and \begin{align*}
		f_3^{-1}(x)=x+\left((\Tr_m^{2m}(x)+(\delta+\delta^{2^m})^{2^i+1})^k+\delta\right)^{2^i+1}\\+\left((\Tr_m^{2m}(x)+(\delta+\delta^{2^m})^{2^i+1})^k+\delta\right)^{2^{m+i}+1},
	\end{align*}  respectively.  
\end{theorem}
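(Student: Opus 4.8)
The plan is to apply Lemma~\ref{th1} with the linear map $\lambda(x)=\Tr_m^{2m}(x)$, the identity $f_1(x)=x$ (so $\varphi(x)=x$), and the choice $\phi(x)=x$, $\bar\phi(x)=0$, exactly as in the preceding theorems of this section; since $\Tr_m^{2m}(x)\circ x = x\circ\Tr_m^{2m}(x)$ and $x$ is trivially injective on each fibre $\Tr_m^{2m}(a)^{-1}$, the hypotheses of Lemma~\ref{th1} are satisfied and $f^{-1}(x)=x - h\bigl(g^{-1}(\Tr_m^{2m}(x))\bigr)$, where for $f_2$ we have $h(x)=(x^k+\delta)^{2^i+1}+(x^k+\delta)^{2^m+2^i}$ and for $f_3$ we have $h(x)=(x^k+\delta)^{2^i+1}+(x^k+\delta)^{2^{m+i}+1}$, with $g(x)$ the induced map on $\gf_{2^m}$. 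So the whole problem reduces to computing $g^{-1}(x)$ on $\gf_{2^m}$ in both cases.

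First I would write out $g(x)$ explicitly. By Lemma~\ref{th1}, $g(x)=\sum_{j=0}^{1}\bigl[(x^k+\delta)^{(2^i+1)2^{mj}}+(x^k+\delta)^{(2^m+2^i)2^{mj}}\bigr]+x$ for $f_2$. Here is where the key simplification happens: for $y\in\gf_{2^m}$ (so $y^{2^m}=y$) and with $w = y^k+\delta$, one has $w^{2^m}=y^k+\delta^{2^m}$, and I expect the four summands to collapse. Indeed $w^{2^i+1}+w^{2^m+2^i} = w^{2^i}(w+w^{2^m}) = w^{2^i}(\delta+\delta^{2^m})$; applying $2^m$-Frobenius and adding, and using that $\delta+\delta^{2^m}\in\gf_{2^m}$ raised to $2^m$ is itself, the sum over $j$ should telescope to something like $(\delta+\delta^{2^m})\bigl[w^{2^i}+(w^{2^m})^{2^i}\bigr] = (\delta+\delta^{2^m})(\delta+\delta^{2^m})^{2^i} = (\delta+\delta^{2^m})^{2^i+1}$, a constant $c\in\gf_{2^m}$. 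Hence $g(x)=x+c$ on $\gf_{2^m}$, so $g^{-1}(x)=x+c=x+(\delta+\delta^{2^m})^{2^i+1}$. For $f_3$ the computation is the mirror image: $w^{2^i+1}+w^{2^{m+i}+1} = w\bigl(w^{2^i}+w^{2^{m}\cdot 2^i}\bigr) = w\bigl(w+w^{2^m}\bigr)^{2^i} = w(\delta+\delta^{2^m})^{2^i}$, and summing over $j$ gives the same constant $(\delta+\delta^{2^m})^{2^i+1}$, so again $g^{-1}(x)=x+(\delta+\delta^{2^m})^{2^i+1}$.

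Then I substitute back: $f_2^{-1}(x)=x+h_2\bigl(\Tr_m^{2m}(x)+(\delta+\delta^{2^m})^{2^i+1}\bigr)$ (the minus sign is a plus in characteristic $2$), which gives precisely
\begin{align*}
f_2^{-1}(x)=x&+\bigl((\Tr_m^{2m}(x)+(\delta+\delta^{2^m})^{2^i+1})^k+\delta\bigr)^{2^i+1}\\
&+\bigl((\Tr_m^{2m}(x)+(\delta+\delta^{2^m})^{2^i+1})^k+\delta\bigr)^{2^i+2^m},
\end{align*}
and likewise for $f_3^{-1}$ with the exponent $2^{m+i}+1$ in the last term, matching the claimed formulas.

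The main obstacle is the telescoping identity for $g(x)$: I must verify carefully that the cross-terms and the Frobenius twists combine to the single constant $(\delta+\delta^{2^m})^{2^i+1}\in\gf_{2^m}$ and that no $x$-dependence survives — in particular checking that $(\delta+\delta^{2^m})$ is $\gf_{2^m}$-rational (it is, being fixed by $x\mapsto x^{2^m}$) so that raising it to $2^i$ and to $2^m$ behaves as expected, and that the factor $w^{2^i}+(w^{2^m})^{2^i}$ equals $(w+w^{2^m})^{2^i}=(\delta+\delta^{2^m})^{2^i}$. Everything else is a direct appeal to Lemma~\ref{th1}. Because the argument is essentially identical to the earlier involution-type theorems in this section, I would present only the reduction via Lemma~\ref{th1} and the computation of $g(x)$ and $g^{-1}(x)$, as the paper itself signals by its remark about omitting similar proofs.
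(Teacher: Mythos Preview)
Your proposal is correct and follows essentially the same route as the paper: reduce via Lemma~\ref{th1} with $f_1(x)=x$, $\phi(x)=x$, $\bar\phi(x)=0$, then show that the induced map $g$ on $\gf_{2^m}$ collapses to the affine shift $g(x)=x+(\delta+\delta^{2^m})^{2^i+1}$ (the paper expands the four summands directly while you pair them and apply Frobenius, but the algebra is identical), so $g^{-1}(x)=x+(\delta+\delta^{2^m})^{2^i+1}$ and the claimed formulas for $f_2^{-1}$, $f_3^{-1}$ follow by substitution.
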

\begin{proof}
	As the proofs are similar, we will focus solely on  determining the compositional inverse of $f_2(x)$ over  $\gf_{2^{2m}}$ here. 
	It follows from Lemma \ref{th1} that $f_2(x)$ permutes $\gf_{2^{2m}}$ if and only if $g(x)=x+(x^k+\delta)^{2^i+1}+(x^k+\delta)^{2^m(2^i+1)}+(x^k+\delta)^{2^m+2^i}+(x^k+\delta)^{1+2^{m+i}}$ permutes $\gf_{2^m}.$
	Moreover, we have 
	\begin{align*}
		g(x)=&\,x+(x^k+\delta)^{2^i+1}+(x^k+\delta)^{2^m(2^i+1)}+(x^k+\delta)^{2^m+2^i}+(x^k+\delta)^{1+2^{m+i}}\\
		=&\, x+(x^k+\delta)^{2^i+1}+(x^k+\delta^{2^m})^{2^i+1}+(x^k+\delta^{2^m})(x^k+\delta)^{2^i}\\
		&+(x^k+\delta)(x^k+\delta^{2^m})^{2^i}\\
		=&\, x+(x^k+\delta)^{2^i}(\delta+\delta^{2^m})+	(x^k+\delta^{2^m})^{2^i}(\delta+\delta^{2^m})\\
		=&\, x+(\delta+\delta^{2^m})^{2^i+1}.	\end{align*}
	Thus, $g^{-1}(x)=x+(\delta+\delta^{2^m})^{2^i+1}.$
	This yields that the compositional inverse of $f_2(x)$ over $\gf_{2^{2m}}$ is 
	\begin{align*}
		f_2^{-1}(x)=x+\left((\Tr_m^{2m}(x)+(\delta+\delta^{2^m})^{2^i+1})^k+\delta\right)^{2^i+1}\\+\left((\Tr_m^{2m}(x)+(\delta+\delta^{2^m})^{2^i+1})^k+\delta\right)^{2^i+2^m}
	\end{align*} by Lemma \ref{th1}.
	We are done.
\end{proof}

\section*{Declarations}

\begin{itemize}
	\item
	The research of Pingzhi Yuan is partially supported by the National Natural Science Foundation of China (Grant No. 12171163). The research of Danyao Wu is partially supported by the Guangdong Basic and Applied Basic Research Foundation (Grant No. 2020A1515111090).
\end{itemize}

\bibliography{sn-bibliography}% common bib file

\end{document}